\numberwithin{equation}{section}
\newtheorem{theo}{Theorem}[section]
\newtheorem{pr}[theo]{Proposition}
\newtheorem{lem}[theo]{Lemma}
\newtheorem{rmk}{Remark}[section]
\newcommand{\Z}{\mathbb{Z}}
\newcommand{\N}{\mathbb{N}}
\newcommand{\R}{\mathbb{R}}
\newcommand{\V}{\mathbb{V}}
\newcommand{\PP}{\mathbb{P}}
\newcommand{\EE}{\mathbb{E}}
\begin{document}

\title{A Quenched Functional Central Limit Theorem for Random Walks in Random Environments under $(T)_\gamma$}
\author{ \'Elodie Bouchet\footnotemark[1], Christophe Sabot \footnotemark[1]
 \\ and Renato Soares dos Santos\footnotemark[2] }

\footnotetext[1]{Institut Camille Jordan, CNRS UMR 5208, Universit\'e de Lyon, Universit\'e Lyon 1, 43, Boulevard du 11 novembre 1918, 69622 Villeurbanne, France. 
E-mail: bouchet@math.univ-lyon1.fr; sabot@math.univ-lyon1.fr. \vspace{-10pt} \\}
\footnotetext[2]{
 WIAS, Mohrenstr.\ 39, 10117 Berlin, Germany. E-mail: soares@wias-berlin.de.\\
This work was partially supported by the ANR project MEMEMO2.\\
{\it Key words:} Random walk in random environment; quenched central limit theorem; ballisticity condition. \\
{\it AMS Subject Classification:} 60F05, 60G52.}

\maketitle

\begin{abstract}
We prove a quenched central limit theorem for random walks in i.i.d.\ weakly elliptic random environments
in the ballistic regime.
Such theorems have been proved recently by Rassoul-Agha and Sepp\"al\"ainen in \cite{RaSe09}
and Berger and Zeitouni in \cite{BeZe08} under the assumption of large finite moments for the regeneration time.
In this paper, with the extra $(T)_{\gamma}$ condition of Sznitman we reduce the moment condition
to $\EE(\tau^2(\ln \tau)^{1+m})<+\infty$ for $m>1+1/\gamma$,  
which allows the inclusion of new non-uniformly elliptic examples such as Dirichlet random environments.

\end{abstract}

\section{Introduction}
\label{s:intro}

This paper is concerned with multidimensional random walks in random environments (RWRE) in the ballistic regime. 
We begin with a definition of the model, followed by a brief motivation and historical account.
We then state our results and give an outline of the remainder of the paper.

\subsection{Model}
\label{ss:model}

Let $ \mathcal{P} := \{ (p_z)_{z \in \Z ^d} : p_z \geq 0, \sum_{z} p_z = 1 \} $ be the simplex of all probability vectors on $\Z^d$. 
We call \emph{environment} any element $\omega := \{ \omega_x : x \in \Z^d \}$ of the environment space $ \Omega := \mathcal{P} ^{\Z^d} $. 
Fixed $\omega \in \Omega$, the random walk in environment $\omega$ starting from $x$ 
is defined as the Markov chain $ \{ X_n : n \geq 0 \} $ in $\Z^d$ with law $ P_{x,\omega} $ 
such that $ P_{x,\omega}( X_0 = x )=1 $ and
\[ P_{x,\omega}(X_{n+1} = y | X_n = x) = \omega_{x , y-x} \]
for each $x,y \in \Z^d$.

A random environment is specified by choosing a probability measure
$\PP$ on the environment space $\Omega$.
We will assume that $ \{ \omega_x : x \in \Z^d \} $ are i.i.d.\ under $\PP$. 
The distribution $P_{x,\omega}$ is called the \emph{quenched} law of the RWRE starting from $x$, 
and $ \PP_x: = \int P_{x,\omega} d\PP $ its \emph{averaged} or \emph{annealed} law. 
We denote by $ E _{x,\omega} $ and $ \EE _x $ the corresponding expectations.

Given a vector $ v_\star \in \R ^{d} \setminus \{0\}$, we say that the RWRE is \emph{transient in direction $v_\star$} if
\begin{equation}\label{e:transience}
\PP_0 \left( \lim_{n \to \infty} X_n \cdot v_\star = \infty\right) = 1.
\end{equation}
It is well known that, if \eqref{e:transience} is satisfied, it is possible to define \emph{regeneration times} $(\tau_k) _{k \in \N}$ 
that satisfy $ \sup _{n < \tau_i} X_n \cdot v_\star < X _{\tau_i} \cdot v_\star = \inf _{n \geq \tau_i} X_n \cdot v_\star $. 
These were first introduced by Sznitman and Zerner in \cite{SzZe99}, where their construction is detailed.

Another important type of hypotheses for the model are ellipticity assumptions. 
These are conditions on the positivity of $\omega_{0,z}$ for some collection of sites $z \in \Z^d$.
For example, the environment is called \emph{elliptic} if there exists
a basis $\{e_1, \ldots, e_d\}$ of $\Z^d$ such that $\omega_{0,\pm e_j} > 0$ a.s.\ for all $j = 1,\ldots,d$,
and \emph{uniformly elliptic} if there exists some $\epsilon > 0$ such that
$\omega_{0,\pm e_j} \ge \epsilon$ a.s.\ for all $j=1,\ldots, d$.
Milder conditions are also used in the literature;
in this case we call the environment ``weakly elliptic''.

\subsection{Motivation}
\label{ss:motivation}

The model described in Section~\ref{ss:model} has been the subject of intense study for several decades now.
In one dimension, it is currently very well understood but, in higher dimensions, 
important questions remain open despite many accomplishments.
A subclass of models for which several results are available is the so-called \emph{ballistic regime}. 
In this case, not only is the RWRE transient as in \eqref{e:transience}
but moves linearly with time in direction $v_\star$, often satisfying some prescribed deviation bounds.

One way to obtain such bounds is to require finite moments for the regeneration time.
Indeed, for $ d \geq 2 $, under \eqref{e:transience} and supposing $ \EE_0 \left[ \tau_1 \right] < \infty $, 
the RWRE satisfies a law of large numbers with a non-degenerate velocity 
(see Sznitman and Zerner \cite{SzZe99} and Zerner \cite{Ze02}) 
while, if $\EE_0 \left[ \tau_1^{2} \right] < \infty$, it also satisfies a 
functional central limit theorem (FCLT) under the annealed law (see Sznitman \cite{Sz00,Sz01}).

Under stricter conditions, one is able to prove
also a FCLT under the quenched law.
This has been proved for example by Rassoul-Agha and Sepp\"al\"ainen in \cite{RaSe09}
under weak ellipticity and $\EE_0 \left[ \tau_1^{176d+\varepsilon} \right] < \infty$,
and by Berger and Zeitouni in \cite{BeZe08} under uniform ellipticity and $\EE_0 \left[ \tau_1^{40} \right] < \infty$.

The aim of this article is to improve the moment assumptions on $\tau_1$ under an additional restriction: 
the ballisticity condition $(T)_\gamma$, first introduced by Sznitman in \cite{Sz01,Sz02}.
It is then enough to require $ \EE_0 \left[ \tau_1^{2} (\ln \tau_1 ) ^m \right] < + \infty $
with $m > 1 + 1/\gamma$.
This result offers no improvement in the uniformly elliptic case
since then condition $(T)_\gamma$ is known to imply finite moments of all orders for $\tau_1$.
However, without uniform ellipticity the latter implication might not true, 
and our extension can considerably improve the range of validity of the quenched FCLT. 
This is the case for example when the random environment has a product Dirichlet distribution, 
as discussed in Section~\ref{sec:Dirichlet} below.

\subsection{Main results}
\label{ss:results}

In the following, we assume the existence of $v_\star \in \R^d \setminus \{0\}$ such that the random walk in random environment satisfies (\ref{e:transience}).
We will also assume the following conditions:
\begin{enumerate}

\item[]
\textbf{Condition $(S)$.} The walk has bounded steps: there exists a finite, deterministic and positive constant $r_0$ such that $ \PP ( \omega _{0,z} = 0 ) = 1 $ for all $ |z| > r_0 $.

\item[]
\textbf{Condition $(R)$.} 
The set $ \mathcal{J} := \{ z : \EE ( \omega _{0,z} ) > 0 \}$ of admissible steps under $\PP$ satisfies $ \mathcal{J} \not\subset \R u $ for all $ u \in \R ^d $.
Furthermore, $ \PP ( \exists z : \omega_{0,0} + \omega_{0,z} = 1 ) < 1 $.

\end{enumerate}

Conditions $(S)$ and $(R)$ are exactly as stated in the article \cite{RaSe09} of Rassoul-Agha and Sepp\"al\"ainen. 
They hold for example in the case of nearest-neighbour random walks in elliptic random environments, such as Dirichlet random environments. 

Transience, $(S)$, $(R)$ and $ \EE_0 \left[ \tau_1 \right] < + \infty $ are enough to imply 
a strong law of large numbers with a velocity $ v \in \R^d \setminus \{ 0 \} $ 
(see Sznitman and Zerner \cite{SzZe99} and Zerner \cite{Ze02}), i.e.,
\[ \PP_0 \left( \lim _{n \to \infty} \frac{X_n}{n} = v \right) = 1 .\]
Define now a sequence of processes $ (B^{(n)}_t) _{t\geq0} $ by setting
\begin{equation}
B^{(n)}_t = \frac{X_{[nt]}-[nt]v}{\sqrt{n}}, \;\; t \ge 0,
\end{equation}
where $ [x] := \max \{ n \in \Z : n \leq x \}$ stands for the integer part of $x$. 
If in addition $ \EE_0 \left[ \tau_1^{2} \right] < + \infty $, 
then $B^{(n)}$ converges under the annealed law to a Brownian motion
with a non-degenerate covariance matrix (see Sznitman \cite{Sz00,Sz01}). 

Our key assumption is the ballisticity condition $(T)_\gamma$, introduced by Sznitman in \cite{Sz01,Sz02}:
\begin{enumerate}
\item[]
\textbf{Condition $(T)_\gamma$, with $ 0 < \gamma \leq 1 $.}
The walk is transient in direction $v_\star \neq 0$ and there exists $c > 0$ such that
\begin{equation}\label{condition-T}
\EE_0 \left[ \exp \left( c \sup_{1 \le n \le \tau_1} \|X_n\| ^\gamma \right) \right] < \infty .
\end{equation}
\end{enumerate}
All concrete examples where an annealed FCLT has been proved so far satisfy condition $(T)_\gamma$.

Our main result is the following:
\begin{theo}[Quenched Functional Central Limit Theorem]
\label{theo:quenchedFCLT}
Set $d \geq 2 $. We consider a random walk in an i.i.d.\ random environment. 
Assume that there is a direction $v_\star \in \R ^d \setminus \{ 0 \}$ where the transience condition (\ref{e:transience}) holds, 
and denote by $\tau_1$ the corresponding regeneration time.
Suppose that the walk satisfies conditions $(S)$, $(R)$ and $ (T)_\gamma $ for some $ 0 < \gamma \leq 1 $.
Also suppose that $ \EE_0 \left[ \tau_1^{2} (\ln \tau_1 ) ^m \right] < + \infty $ for some $ m > 1 + \frac{1}{\gamma}$. 
Then, for $\PP$-a.e.\ environment $\omega$, 
the process $B^{(n)}_t$ converges in law (as $ n \to \infty $) under $P_{0,\omega}$ to a Brownian motion with a deterministic, 
non-degenerate covariance matrix.
\end{theo}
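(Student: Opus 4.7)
The strategy I would follow is that of Rassoul-Agha--Sepp\"al\"ainen \cite{RaSe09}, sharpened at the key variance estimate by using $(T)_\gamma$ in place of high polynomial moments of $\tau_1$. Since $\EE_0[\tau_1^2]<\infty$ is already assumed, the annealed FCLT is available: it provides tightness of $(B^{(n)})$ under $\PP_0$ (and hence, by standard arguments, under $P_{0,\omega}$ for $\PP$-a.e.\ $\omega$) and identifies the candidate Brownian limit together with its non-degenerate covariance. The whole matter reduces to showing, for a countable convergence-determining family of bounded Lipschitz test functions $f$, that
\[
\Psi_n(\omega):=E_{0,\omega}\!\left[f(B^{(n)})\right]\;\xrightarrow[n\to\infty]{}\;\EE_0[f(B)],\qquad \PP\text{-a.s.}
\]
Since $\EE_\PP[\Psi_n]\to \EE_0[f(B)]$ by the annealed FCLT, it suffices, via Borel--Cantelli along a sparse (e.g.\ polynomial) subsequence $n_k$, to establish a summable upper bound on $\mathrm{Var}_\PP(\Psi_{n_k})$.

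The variance is expanded in the classical way as
\[
\mathrm{Var}_\PP(\Psi_n)=\EE_0^{\otimes 2}\!\left[f(B^{(n,1)})\,f(B^{(n,2)})\right]-\EE_0\!\left[f(B^{(n)})\right]^2,
\]
where $X^1,X^2$ are two independent walks evolving in the same environment. I would then introduce joint regeneration times for the pair $(X^1,X^2)$, following the Bolthausen--Sznitman / Berger--Zeitouni / Rassoul-Agha--Sepp\"al\"ainen construction: between joint regenerations the two trajectories do not meet, so the corresponding contributions factorise into a product of single-walk quantities and, upon summing, cancel against $\EE_0[f(B^{(n)})]^2$. The residual error is governed by the expected number of ``interaction'' events, which reduces to intersection estimates for two independent copies of the regeneration skeleton $(X_{\tau_k})_k$, coupled with tail control on the sizes of the blocks themselves.

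Condition $(T)_\gamma$ enters precisely here, through the exponential estimate $\EE_0[\exp(c|X_{\tau_1}|^\gamma)]<\infty$. This yields the transverse concentration of $X_{\tau_k}$ around $k\,\EE_0[X_{\tau_1}]$ and, via local-CLT-type reasoning, sharp polynomial-in-$n$ bounds on the expected number of intersections. The log-moment hypothesis $\EE_0[\tau_1^2(\ln\tau_1)^m]<\infty$ then enters through a truncation at the threshold $\{\tau_1\le(\ln n)^{1/\gamma}\}$: on the truncated event the $(T)_\gamma$-driven intersection estimates apply without essential loss, while on the complementary event Markov's inequality yields $\PP_0(\tau_1>(\ln n)^{1/\gamma})\lesssim(\ln n)^{-(2+m)}$, whose contribution is negligible.

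The crux, and the main obstacle, is that the truncation threshold is forced to be of logarithmic order by the stretched-exponential cost of extracting spatial control of scale $\varepsilon$ from $(T)_\gamma$ (paid at a cost $|\ln\varepsilon|^{1/\gamma}$): any smaller threshold leaves the $(T)_\gamma$ estimates ineffective, while any larger one fails to make the tail of $\tau_1$ small enough. The exponent $m>1+1/\gamma$ is exactly the one that makes the resulting variance bound summable along $n_k$, closing the Borel--Cantelli loop. Everything else---lifting from the regeneration skeleton to the full path, passing from one-dimensional marginals to the functional statement via tightness, and the non-degeneracy of the limiting covariance---then follows from standard arguments in the cited references.
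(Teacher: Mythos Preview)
Your high-level strategy---reduce the quenched FCLT to a summable variance bound via Bolthausen--Sznitman, then control the variance through the interaction of two independent walks in a common environment---matches the paper. But the mechanism you describe for the reduction is not the one that actually works, and one step is stated incorrectly.

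First, the claim that ``between joint regenerations the two trajectories do not meet'' is false: the joint regeneration times $(\mu_k,\tilde\mu_k)$ mark common fresh levels at which both walks regenerate, but nothing prevents intersections inside a block (when $Y_k=\tilde X_{\tilde\mu_k}-X_{\mu_k}$ is small they typically do meet). So the variance does not split into a cancelling product plus an interaction remainder in the way you outline. The paper instead follows Berger--Zeitouni: one writes $E_{0,\omega}[F(W^{(n)})]-\EE_0[F(W^{(n)})]$ as a martingale over the filtration of spatial slabs $\{x\cdot v_\star<k\}$, and bounds each increment by replacing the path near level $k$ via a regeneration-after-hitting trick. The Lipschitz bound then produces two pieces: a truncation error governed by $\EE_0[(\hat\tau_k-h_k)^2,\,\hat\tau_k-h_k>M_n]$, and a pointwise term that, after summing over sites in the slab, becomes exactly $\EE_0[Q_{Kn}]$, the expected number of intersections. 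This slab-martingale decomposition is the missing idea; your direct product expansion does not obviously reproduce it.

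Second, the role of $(T)_\gamma$ is more specific than ``transverse concentration plus local CLT''. In the variance step it enters through a H\"older-type argument: controlling the truncation error forces one to bound $\EE_0[\tau_2^2(\ln\tau_2)^{m-1/\gamma}\|X_{\tau_1}\|]$, and $(T)_\gamma$ is precisely what makes the spatial factor $\|X_{\tau_1}\|$ cost only a $(\ln\cdot)^{1/\gamma}$ power---hence the exponent $m-1/\gamma$ in the final bound $(\ln n)^{-(m-1/\gamma)}$. The truncation level is $M_n=n^{\delta/2}$, not $(\ln n)^{1/\gamma}$; your claimed tail $\PP_0(\tau_1>(\ln n)^{1/\gamma})\lesssim(\ln n)^{-(2+m)}$ is also off (Markov from the log-moment gives $(\ln n)^{-2/\gamma}(\ln\ln n)^{-m}$). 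In the intersection estimate $(T)_\gamma$ is used differently again: the joint-regeneration construction of \cite{RaSe09} is modified via time-changed ``underlined'' walks that keep the two paths at comparable levels, so that the first joint regeneration level inherits all polynomial moments from $(T)_\gamma$; this feeds a coupling with two walks in \emph{independent} environments and yields $\EE_0[Q_n]\le C_\eta n^{1-\eta}$ for any $\eta<\tfrac12$ via a Green-function estimate for the difference chain (Theorem~A.1 of \cite{RaSe09}), not a local CLT for $(X_{\tau_k})_k$.
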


Note that under a rather weak integrability condition on the environment at one site, denoted $(E')_0$, Campos and Ramirez 
proved in the
non-uniformly elliptic case that the $(T)_{\gamma}$ conditions are all equivalent to a weaker polynomial condition $(P)_M$, cf \cite{CaRa13}
for a precise result
(in the uniformly elliptic case, this is a result of Berger, Drewitz and Ram\'\i rez, \cite{BeDrRa12}). 
The polynomial condition
is not stated in the same terms as in formula (\ref{condition-T}), 
but in terms of exit probabilities of boxes.
Nevertheless, it can be shown to be
equivalent to an integrability condition on some polynomial moments of $ \sup_{1 \le n \le \tau_1} \|X_n\|$.
With respect to the quenched CLT, theorem 1.1 of \cite{CaRa13} tells us that if condition $(E')_0$ and condition $(P)_M$ are satisfied for $M>15d+5$ (cf \cite{CaRa13} for the definition), 
then condition $(T)'=\cap_{\gamma\in (0,1)} (T)_\gamma$ holds, reducing the condition of theorem  \ref{theo:quenchedFCLT}
to $ \EE_0 \left[ \tau_1^{2} (\ln \tau_1 ) ^m \right] < + \infty $ for some $m>2$.

Note also that sufficient conditions for the integrability of moments $\EE_0 \left[ \tau_1^{p} \right]$ 
have been given in two papers, by Bouchet, Ram\'\i rez, Sabot in \cite{BoRaSa14} and Fribergh, Kious in \cite{FrKi14}. 
These conditions involve the environment at one site or in a small box.

\begin{rmk}
We believe that condition $(T)_\gamma$ in theorem~\ref{theo:quenchedFCLT} above can be replaced by the condition
$$\EE_0 \left[ \left( \sup_{1 \le n \le \tau_1} \|X_n\| \right) ^p \right] < \infty \; \text{ for some } \, p \text{ large enough (depending on $d$)}$$
and a more restrictive moment condition on the renewal times. 
But in fact, as explained above, under a rather weak ellipticity condition, the condition above is
equivalent to the condition  $(T)_\gamma$, cf Theorem 1.1 in \cite{CaRa13}.
\end{rmk}

Our proof of theorem~\ref{theo:quenchedFCLT} is based on strategies and techniques used by Berger and Zeitouni in \cite{BeZe08}, and by Rassoul-Agha and Sepp\"al\"ainen in \cite{RaSe09}. The key differences compared to \cite{BeZe08} and \cite{RaSe09} are in the following two steps:
\begin{itemize}
\item We improve the key estimate of section 4 of \cite{BeZe08} by means of condition $(T)_\gamma$.
\item The construction of the joint regeneration times of section 7 of \cite{RaSe09} is modified so that condition $(T)_\gamma$ 
can be used to get better estimates on the number of intersections. 
\end{itemize}

\subsection{Outline}
\label{ss:outline}

The rest of the paper is organized as follows.
In Section~\ref{sec:Dirichlet}, 
we discuss Dirichlet random environments, which are examples 
where theorem~\ref{theo:quenchedFCLT} significantly improves previously known results.
The proof of theorem~\ref{theo:quenchedFCLT} 
is given in section~\ref{s:proof_theo_qFCLT} conditionally
on two auxiliary theorems.
The first auxiliary theorem reduces the problem to bounding
the expected number of intersections of two independent copies of the RWRE in the same random environment,
while the second provides the required bound.
Their proofs are given in sections~\ref{sec:boundbyintersec} and \ref{sec:boundonintersections}
and are based on section 4 of \cite{BeZe08} and section 7 of \cite{RaSe09}, respectively.

\section{An illustration: the case of Dirichlet environments}
\label{sec:Dirichlet}

In this section, we consider a particular case for which our theorem~\ref{theo:quenchedFCLT} significantly increases the understanding of the behavior: the case of random walks in Dirichlet random environments. 
This case is particularly interesting because it offers analytical simplifications, and because it is linked with reinforced random walks.
Indeed, the annealed law of a random walk in Dirichlet environment corresponds exactly to the law of a linearly directed-edge reinforced random walk (\cite{Pe88}, \cite{EnSa06,Sa11}).

In the following, we consider a nearest neighbor walk on $\Z^d$, $d\geq2$, and we note $ e_1, \dots, e_{2d} $ the canonical vectors with the convention $ e_{d+i} = - e_i $ for $ 1 \leq i \leq d $.

Given a set of positive real weights $ (\alpha _1, \dots , \alpha _{2d}) $, a random i.i.d. Dirichlet environment is a law on $\Omega$ constructed by choosing independently at each site $x \in \Z^d$ the values of $ \left( \omega _{ x, e_i } \right) _{i \in [\![1,2d]\!]} $ according to a Dirichlet law with parameters $ (\alpha _1, \dots , \alpha _{2d})$, i.e.  the law with density 
\[ \frac{ \Gamma \left( \sum _{i=1} ^{2d} \alpha _i \right) }{ \prod_{i=1} ^{2d} \Gamma \left( \alpha _i \right) } \left( \prod _{i=1} ^{2d} x_i ^{\alpha _i - 1} \right) dx_1 \dots dx_{2d-1} \]
on the simplex $\{(x_1,\dots, x_{2d}) \in ]0,1]^{2d}, \sum _{i=1} ^{2d} x_i = 1 \}$. Here $\Gamma$ stands for the usual Gamma function $ \Gamma (\beta) = \int _0 ^\infty t ^{\beta - 1} e ^{-t} dt$ , and $dx_1 \dots dx_{2d-1}$ represents the image of the Lebesgue measure on $ \R^ {2d-1} $ by the application $( x_1, \dots ,x_{2d-1} )  \to ( x_1, \dots , x_{2d-1}, 1- x_1 - \dots - x_{2d-1} ) $. It is straightforward that the law does not depend on the specific role of $ x_{2d}$. Note that Dirichlet environments are not uniformly elliptic: we can find no positive $c$ such that $ \PP ( \omega_{0,e_i} \geq c ) = 1 $. 

Set a Dirichlet law with fixed parameters $ ( \alpha_1, \dots, \alpha_{2d} ) $. 
Theorem 1 of \cite{BoRaSa14}  (see also Remark~3 and Section~1.3.2 therein) 
gives us that, if we assume $(T)_\gamma$ with $ 0 < \gamma \leq 1 $, then $ \EE_0 \left[ \tau_1^{p} \right] < + \infty $ is satisfied whenever
\[ \kappa :=  2 \left( \sum_{i=1} ^{2d} \alpha _i \right) - \max _{i=1,\dots, d} (\alpha _i + \alpha _{i+d}) > p .\]
On the other hand, condition $(T)_\gamma$ is satisfied for all $ 0 < \gamma \leq 1 $ whenever
\begin{equation}\label{e:condforT_Dirichlet}
\sum _{1 \leq i \leq d} | \alpha_i - \alpha_{i+d} | > 1.
\end{equation}
Indeed, it was shown by Tournier in \cite{To09} (see also Enriquez and Sabot \cite{EnSa06}) 
that \eqref{e:condforT_Dirichlet} implies Kalikow's condition,
which is in turn known to imply condition $(T) := (T)_1$; see Remark 2.5 (ii) in \cite{Sz01}.
The complete characterization of $(T)_\gamma$ in terms of the parameters of the Dirichlet law remains an open question, but we believe condition $(T)_\gamma$ should be satisfied if and only if $ \max _{1 \leq i \leq d} | \alpha_i - \alpha_{i+d} | > 0 $, 
i.e., we expect the hypotheses of Theorem~\ref{theo:quenchedFCLT} to hold as soon as $\kappa > 2$ and the walk is non-symmetric.

Theorem~\ref{theo:quenchedFCLT} thus gives us a quenched functional central limit theorem for Dirichlet environments as soon as the walk is transient, $\kappa > 2$
and \eqref{e:condforT_Dirichlet} holds.
This is a real improvement compared to the results of \cite{BeZe08} (that do not apply as the Dirichlet environment is not uniformly elliptic) or \cite{RaSe09} (that required $\kappa > 176 d$).

Also note that, in dimension $d\geq3$, $ \kappa > 1 $ implies the existence of an absolutely continuous invariant probability measure for the environment viewed from the particle (see Sabot \cite{Sa13}), which gives directly $ \EE_0 \left[ \tau_1 \right] < + \infty $ in the non-symmetric case. However, it gives no information on $ \EE_0 \left[ \tau_1^{p} \right] $ for other $ p < \kappa $.

\section{Proof of theorem~\ref{theo:quenchedFCLT}}
\label{s:proof_theo_qFCLT}

To prove theorem~\ref{theo:quenchedFCLT}, we will use a method introduced by Bolthausen and Sznitman in \cite{BoSz02}.
Fix $T \in \N$ and $ F : C([0,T], \R) \to \R $ a bounded function that is $1$-Lipschitz, i.e., 
such that for all $ f,g \in C([0,T], \R) $, $ | F(f) - F(g) | \leq \sup_{x \in [0,T]} | f(x) - g(x) | $. 
Let $ W^{(n)} _\cdot $ be the polygonal interpolation of $ \frac{k}{n} \to B^{(n)}_\frac{k}{n} , k \geq 0 $, 
and take $ b \in (1,2] $. 
Since the annealed FCLT holds by our assumption on $\tau_1$,
lemma~4.1 of \cite{BoSz02} reduces the problem to showing 
\[
\sum_{n=0} ^{+\infty} \text{Var} \left( E _{0, \omega} ( F( W^{([b^n])}_\cdot ) ) \right) < + \infty,
\]
where $ \text{Var} \left( E _{0, \omega} ( F( W^{(n)}_\cdot ) ) \right) = \left\| \EE _0 \left[ F(W^{(n)})\middle| \omega \right] - \EE _0 \left[ F(W^{(n)})\right] \right\|^2_2 $.
This will be accomplished via two theorems described next.

Let $Q_n$ be the number of intersections of two independent copies of the walk $X$ in the same random environment $\omega$ up to time $n-1$, 
i.e.,
\begin{equation}\label{defQn}
Q_n := | X_{[0,n)} \cap \tilde{X}_{[0,n)}|,
\end{equation}
where $X_{[0,n)} := \{X_0, \ldots, X_{n-1}\}$,
$\tilde{X}$ is an independent copy of $X$ defined in the same random environment
and $|\cdot|$ denotes the cardinality of a set.

In the following two theorems, 
we consider a random walk in an i.i.d.\ random environment in $d\ge 2$, and 
assume that there exist $v_\star \in \R^d\setminus \{0\}$ and $0 < \gamma \leq 1 $ such that the walk is transient in direction $v_\star$ and satisfies conditions $ (S), (R) $ and $(T)_\gamma $.

\begin{theo}
\label{theo:boundbyintersec}
Assume that $ \EE_0 \left[ \tau_1^{2} (\ln \tau_1 )^m \right] < + \infty $ for some $m >1 + \frac{1}{\gamma}$.
Then there exist an integer $K$ and,
for all $\delta \in (0,1)$, a positive constant $C$ such that
\begin{equation}\label{e:bbintersec}
\left\| \EE _0 \left[ F(W^{(n)})\middle| \omega \right] - \EE _0 \left[ F(W^{(n)})\right] \right\|^2_2
\leq C \left(  (\ln n) ^{-(m- \frac{1}{\gamma})} + n^{-(1-\delta)} 
\EE_0\left[Q_{Kn} \right] \right) \; \forall\; n \ge 2.
\end{equation}
\end{theo}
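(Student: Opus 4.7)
The plan is to follow the two-walk variance strategy of section~4 of \cite{BeZe08}, but to replace the strong polynomial moment condition on $\tau_1$ by a combination of a logarithmic-scale truncation of the regeneration times and the ballisticity condition $(T)_\gamma$; together these should yield the loss $(\ln n)^{-(m - 1/\gamma)}$ in place of a polynomial one. First I would rewrite the left-hand side of \eqref{e:bbintersec} as a two-walk expectation: introducing an independent copy $\tilde X$ of $X$ in the same environment and letting $\tilde W^{(n)}$ denote the corresponding polygonal process,
\[
\bigl\| \EE_0[F(W^{(n)}) \mid \omega] - \EE_0[F(W^{(n)})] \bigr\|_2^2
= \EE_0\bigl[ F(W^{(n)})\, F(\tilde W^{(n)}) \bigr] - \EE_0[F(W^{(n)})]^2.
\]
The aim is then to quantify how much the two walks decouple as long as they do not share sites, via the regeneration structure of \cite{SzZe99}.

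Next I would introduce a truncation of the regeneration increments at scale $L_n = (\ln n)^{1/\gamma + \eta}$ for a small $\eta > 0$, and let $A_n$ be the event that every increment $\tau_k - \tau_{k-1}$ relevant up to the first regeneration past $n$ satisfies $\tau_k - \tau_{k-1} \le L_n$. By Markov's inequality applied with the weight $t^2(\ln t)^m$, combined with a union bound over the $O(n)$ candidate block indices, one should obtain $\PP_0(A_n^c) \lesssim (\ln n)^{-(m - 1/\gamma)}$ after optimizing $\eta$; on this event the boundedness of $F$ then contributes only the first term of \eqref{e:bbintersec}. Condition $(T)_\gamma$ enters twice here: it provides the stretched-exponential control of $\sup_{n \le \tau_1}\|X_n\|^\gamma$, so that on $A_n$ the per-block displacement is $O(L_n^{1/\gamma})$ up to logs, and it keeps the number of effective blocks before time $n$ at order $n/\EE_0[\tau_1]$, which is what makes the union bound sharp enough to reach the exponent $m - 1/\gamma$.

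On the good event $A_n$ I would then carry out the variance estimate block by block as in \cite{BeZe08}. Two blocks, one from $X$ and one from $\tilde X$, are $\PP$-independent whenever the $v_\star$-slabs they occupy are disjoint, by the regeneration construction of \cite{SzZe99}. A martingale-difference decomposition exploiting the fact that $F$ is $1$-Lipschitz produces per-block fluctuations of order $L_n^{1/\gamma}/\sqrt n$, and summing the nonzero covariances reduces the problem to counting pairs of colliding blocks. This count is in turn dominated by a constant multiple of $\EE_0[Q_{Kn}]$ for $K$ chosen large enough that both walks are certain to have reached time $n$ within at most $Kn$ steps on $A_n$; the residual polylogarithmic factors $L_n^{c}$ are absorbed into the arbitrary $n^\delta$ loss, yielding the second term of \eqref{e:bbintersec}.

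The hard part will be the truncation step, because the moment hypothesis is only a logarithmic improvement over the barely-sufficient second moment used for the annealed CLT: a naive count of blocks would give a loss of order $n / (L_n^2(\ln L_n)^m)$, which is not even small. The delicate point is that $(T)_\gamma$ ensures the number of blocks one must actually track up to the first regeneration after $n$ is genuinely $O(n)$ and each has only polylogarithmic spatial extent, so with $L_n$ as above the three rates (block count, block displacement, and tail of $\tau_1$) balance to give the sharp $(\ln n)^{-(m-1/\gamma)}$. Making this balance rigorous, and propagating it through the Berger--Zeitouni block-covariance machinery without losing any power of $\log n$, is where the bulk of the technical work should lie.
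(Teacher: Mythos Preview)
Your truncation step has a genuine gap that you partly notice but do not repair. With $L_n=(\ln n)^{1/\gamma+\eta}$, Markov's inequality with weight $t^2(\ln t)^m$ gives only $\PP_0(\tau_1>L_n)\lesssim (\ln n)^{-(2/\gamma+2\eta)}(\ln\ln n)^{-m}$, so the union bound over the $O(n)$ blocks produces $n\cdot(\text{polylog})^{-1}$, which diverges. Your proposed rescue via $(T)_\gamma$ does not work: $(T)_\gamma$ controls the spatial extent $\sup_{k\le\tau_1}\|X_k\|$ of a block, not the number of blocks, which is of order $n/\EE_0[\tau_1]$ regardless. Bounding the bad-event contribution by $\|F\|_\infty\cdot\PP_0(A_n^c)$ therefore cannot reach $(\ln n)^{-(m-1/\gamma)}$.

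The paper's argument avoids this trap in two essential ways. First, the decomposition is a \emph{spatial} martingale in the slabs $\{k-1\le x\cdot v_\star<k\}$, not a temporal block decomposition: one reveals the environment level by level and writes the variance as $\sum_{k\le c_0 n}\|\Delta_k^{(n)}\|_2^2$. Second, on the bad event one does \emph{not} use boundedness of $F$ but the Lipschitz bound $|F(W^{(n)})-F(W^{(n,k)})|\le r_0(\hat\tau_k-h_k)/\sqrt n$, where $\hat\tau_k$ is the next regeneration after hitting level $k-1$. The bad-event sum becomes $n^{-1}\sum_k\EE_0[(\hat\tau_k-h_k)^2,\hat\tau_k-h_k>M_n]$, and converting the sum over spatial levels $k$ to a sum over regeneration indices introduces the weight $\|X_{\tau_1}\|$ (the number of levels a block crosses). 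This is precisely where $(T)_\gamma$ enters: one shows $\EE_0[\tau_2^2(\ln\tau_2)^{m-1/\gamma}\|X_{\tau_1}\|]<\infty$ by splitting on $\{\|X_{\tau_1}\|\ge(\ln k/c)^{1/\gamma}\}$, using $(T)_\gamma$ for that tail and the assumed moment $\EE_0[\tau_1^2(\ln\tau_1)^m]<\infty$ for the complement. The truncation level is then taken \emph{polynomial}, $M_n=n^{\delta/2}$, and the good-event part is handled by a further site-by-site martingale decomposition inside each slab, which directly yields $\sum_z\PP_0(h_z<Kn\mid\omega)^2=\EE_0[Q_{Kn}]$.
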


\begin{theo}
\label{theo:boundonintersections}
For all $ 0 < \eta < \frac{1}{2} $, we can find a constant $  0 < C_\eta < \infty $ depending only on $\eta$ such that, for all $ n \geq 1 $,
\[  \EE_0\left[Q_n \right] \leq C_\eta n ^{1 - \eta} .\]
\end{theo}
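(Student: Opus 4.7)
The plan is to follow the outline sketched by the authors: adapt Section~7 of \cite{RaSe09} while feeding in condition $(T)_\gamma$ at the key quantitative step. First I would construct a sequence of joint regeneration times $0 \le \mathcal{T}_0 < \mathcal{T}_1 < \cdots$ for the coupled pair $(X,\tilde X)$ living in a common environment $\omega$, such that at each $\mathcal{T}_k$ both walks simultaneously reach a new maximum in direction $v_\star$ and thereafter neither walk revisits lower $v_\star$-levels. This is done by alternating the single-walk regeneration construction of \cite{SzZe99} between $X$ and $\tilde X$; the scheme of \cite{RaSe09} must be modified by using stretched-exponential rather than polynomial window sizes at each attempt, so that $(T)_\gamma$ directly yields (i) that the increments $(\mathcal{T}_{k+1}-\mathcal{T}_k,\, X_{[\mathcal{T}_k,\mathcal{T}_{k+1})},\, \tilde X_{[\mathcal{T}_k,\mathcal{T}_{k+1})})$ are i.i.d.\ for $k \ge 0$, and (ii) stretched-exponential tails for $\mathcal{T}_1-\mathcal{T}_0$ and for the intra-block supremum $\sup_{\mathcal{T}_0 \le j < \mathcal{T}_1}(\|X_j - X_{\mathcal{T}_0}\| + \|\tilde X_j - \tilde X_{\mathcal{T}_0}\|)$. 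In particular all polynomial moments of these quantities are finite.

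Next, setting $N_n := \max\{k : \mathcal{T}_k \le n\}$, the LLN for the joint regenerations forces $N_n \le Kn$ outside a set of stretched-exponentially small probability whose contribution is negligible thanks to (ii). On the complementary event I split
\[
Q_n \le |X_{[0,\mathcal{T}_0)} \cap \tilde X_{[0,n)}| + \sum_{k=0}^{Kn} \bigl|X_{[\mathcal{T}_k,\mathcal{T}_{k+1})} \cap \tilde X_{[0,n)}\bigr|,
\]
and the task reduces to bounding each summand in expectation by $C(1+k)^{-1/2 + o(1)}$. The per-block estimate comes from the \emph{transverse} spread between the two walks: letting $\pi$ denote the projection orthogonal to $v_\star$, the gap $\pi(X_{\mathcal{T}_k} - \tilde X_{\mathcal{T}_k})$ is a sum of $k$ i.i.d.\ centered sub-Gaussian vectors in $\R^{d-1}$, of magnitude $\asymp \sqrt k$. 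Any intersection in the $k$-th block of $X$ forces $\tilde X$ to cross the relevant $v_\star$-level and to coincide transversely with a point of $X_{[\mathcal{T}_k, \mathcal{T}_{k+1})}$; combining the stretched-exponential block-size tails from (ii) with a Gaussian-window estimate for the transverse coincidence produces an intersection probability of order $(1+k)^{-(d-1)/2 + o(1)}$. For $d \ge 2$ this is at most $(1+k)^{-1/2 + o(1)}$, and summation over $k \le Kn$ yields $\EE_0[Q_n] \le C_\eta n^{1-\eta}$ for any $\eta < 1/2$.

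The hard part will be the joint regeneration construction itself: in the weakly elliptic setting one must weave together simultaneous regenerations of the two walks in a common environment in such a way that the annealed law has an exact i.i.d.\ structure and that $(T)_\gamma$ can be exploited to give stretched-exponential (rather than merely polynomial) tails on the block statistics. This sharpening is precisely what allows $\eta$ to be pushed arbitrarily close to $1/2$, and is where the improvement over \cite{RaSe09} is concentrated. A secondary technical point is making the transverse Gaussian bound quantitative without a local CLT for the RWRE: this would have to be handled by a direct tail estimate using only the annealed FCLT (itself guaranteed under our hypotheses by \cite{Sz00,Sz01}) combined with the sub-Gaussian moment control coming from (ii).
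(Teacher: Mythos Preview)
Your outline contains a genuine gap at the step where you treat the transverse gap $\pi(X_{\mathcal{T}_k}-\tilde X_{\mathcal{T}_k})$ as ``a sum of $k$ i.i.d.\ centered sub-Gaussian vectors''. Because $X$ and $\tilde X$ evolve in a \emph{common} environment, the joint regeneration increments are not i.i.d.\ in the relevant sense: the law of the displacement increment $Y_{k+1}-Y_k$ (with $Y_k=\tilde X_{\tilde\mu_k}-X_{\mu_k}$) depends on $Y_k$, precisely because the two walks interact when they are close. The process $(Y_k)_{k\ge 1}$ is only a Markov chain on $\V_d$ with transition kernel $q(x,y)$ that is \emph{not} of the form $\overline q(0,y-x)$, and there is no a priori centering or symmetry. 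Hence the ``Gaussian-window'' heuristic for the transverse coincidence has no foundation as stated, and your per-block bound $(1+k)^{-(d-1)/2+o(1)}$ is unjustified.

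The paper's proof confronts exactly this issue. It introduces a second pair $(X,\overline X)$ evolving in \emph{independent} environments, whose joint-regeneration difference $\overline Y_k$ \emph{is} a genuine symmetric random walk on $\V_d$; it then couples $q$ and $\overline q$ so that $\PP_{x,x}(Y_1\neq\overline Y_1)\le C_p|x|^{-p}$ for every $p$, the key input being that $(T)_\gamma$ gives all polynomial moments of $\sup_{0\le n\le\mu_1}\|X_n\|$ (their Lemmas~5.3--5.4 and Proposition~5.8). The $n^{1-\eta}$ bound is then read off from an abstract Green-function estimate for Markov chains close to a symmetric random walk (Theorem~A.1 of \cite{RaSe09}). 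A second, smaller inaccuracy: you claim stretched-exponential tails for $\mathcal{T}_1-\mathcal{T}_0$, but the theorem carries no moment hypothesis on $\tau_1$ at all, and $(T)_\gamma$ controls only the spatial extent $\sup_{n\le\tau_1}\|X_n\|$, not the time $\tau_1$; the paper's argument is purely spatial and never needs integrability of the joint regeneration times.
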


Taking for example $ \delta = \frac{1}{8} $ and $ \eta = \frac{1}{4} $, 
theorems~\ref{theo:boundbyintersec} and \ref{theo:boundonintersections} give a positive constant $C$ such that
\[
\sum_{n=0} ^{+\infty} \text{Var} \left( E _{0, \omega} ( F( W^{([b^n])}_\cdot ) ) \right) 
\leq \sum_{n=0} ^{+\infty} C \left( n^{- ( m- \frac{1}{\gamma})} + b^{-\frac{n}{8}} \right) < \infty 
\]
since $ m- \frac{1}{\gamma} > 1$. This proves theorem~\ref{theo:quenchedFCLT}.

\section{Proof of theorem~\ref{theo:boundbyintersec}}
\label{sec:boundbyintersec}
Before we proceed to the proof, we briefly recall the construction of the regeneration times in direction $v_\star$.
Let $\{ \theta_n : n \geq 1 \}$ be the canonical time shifts on $ ( {\Z^d} ) ^\N$ and, 
for $u \geq 0$, let $ T ^{v_\star} _u := \inf \{ n \geq 0: X_n \cdot v_\star \geq u \} $.
Set $ D ^{v_\star} := \min \{ n \geq 0 : X_n \cdot v_\star < X_0 \cdot v_\star \} $.
We define 
\[ S_0 := 0 , \quad  M_0 := X_0 \cdot v_\star ,\]
\[ S_1 := T ^{v_\star} _{M_0+1}, \quad R_1 := D ^{v_\star} \circ \theta_{S_1} + S_1, \quad M_1 := \sup \{ X_n \cdot v_\star : 0 \leq n \leq R_1 \} ,\]
and then we iterate for all $k \geq 1$,
\[ S_{k+1} := T ^{v_\star} _{M_k+1} , \quad R_{k+1} := D ^{v_\star} \circ \theta _{S_{k+1}} + S_{k+1}, \quad M_{k+1} := \sup \{ X_n \cdot v_\star : 0 \leq n \leq R_{k+1} \} .\]
We can now define the regeneration times as $ \tau_1 := \min \{ k \geq 1 : S_k <\infty , R_k = \infty \} $ and, for all $n \geq 1$, 
$ \tau _{n+1} := \tau_1 (X_\cdot) + \tau_n ( X_{\tau_1+\cdot} - X _{\tau_1} ) $.
Then $((X_{i}-X_{\tau_k})_{\tau_k \le i \le \tau_{k+1}}, \tau_{k+1}-\tau_k)$, $k \ge 1$ are i.i.d.\ under $\PP_0$ and distributed as 
$( (X_i)_{0 \le i \le \tau_1}, \tau_1)$ under $\PP_0(\cdot | D^{v_\star}=\infty)$.
In particular, for any $k \ge 0$,
\[
\EE_0 \left[ f\left( (X_i - X_{\tau_k})_{i \ge \tau_k}, (\tau_{k+i}-\tau_k)_{i \in \N} \right)\right]
\le C \EE_0 \left[ f\left( (X_i)_{i \ge 0}, (\tau_i)_{i \in \N } \right) \right]
\]
for some constant $C > 0$ and any measurable non-negative function $f$.
Another observation is that, with this construction,
\[
 \inf_{n \ge \tau_k} X_n \cdot v_\star \ge \sup_{n \le \tau_{k-1}} X_n \cdot v_\star +1, \;\;\; k \ge 1.
\]

We now come to the proof of theorem~\ref{theo:boundbyintersec}. 
Recall that $F$ is a $1$-Lipschitz function, and that by assumption $(S)$ the walk $X$ has steps bounded by $r_0 \in \N$. We define, for $k \in \N$,
\begin{equation}
\mathcal{G}^{(n)}_k := \sigma\{ \omega_x \colon\, \|x\|_{\infty} \le r_0 T n \text{ and } x \cdot v_\star < k\},
\end{equation}
and
\begin{equation}
\begin{aligned}
\Delta^{(n)}_1 & := \EE _0 \left[ F(W^{(n)}) \middle| \mathcal{G}^{(n)}_1\right] - \EE _0 \left[ F(W^{(n)}) \right], \\
\Delta^{(n)}_k & := \EE _0 \left[ F(W^{(n)}) \middle| \mathcal{G}^{(n)}_k\right] - \EE _0 \left[ F(W^{(n)}) \middle| \mathcal{G}^{(n)}_{k-1}\right], \;\; k \ge 2.
\end{aligned}
\end{equation}
The $\Delta^{(n)}_k$ are martingale increments and, since $X$ has bounded steps, we can find 
an integer $c_0 \ge r_0 T$
such that:
\begin{equation}
\EE _0 \left[ F(W^{(n)})\middle| \omega \right] - \EE _0 \left[ F(W^{(n)})\right] = \sum_{k=1}^{c_0 n} \Delta^{(n)}_k .
\end{equation}
By the martingale property,
\begin{equation}\label{e:estim0}
\left\| \EE _0 \left[ F(W^{(n)})\middle| \omega \right] - \EE _0 \left[ F(W^{(n)})\right] \right\|_2^2
= \sum_{k=1}^{ c_0 n } \left\|\Delta^{(n)}_k \right\|_2^2,
\end{equation}
therefore we only need to study the $L^2$ norm of $\Delta^{(n)}_k$.

To that end, let $ h_k = T ^{v_\star} _{ k-1 } $ be the hitting time of the level $ k-1 $ in direction $v_\star$ and let $\hat{\tau}_k$ be the second regeneration time strictly larger than $h_k$. We do not take the first regeneration time because we need to make sure that $ ( X _{\hat{\tau}_k} - X_{h_k} ) \cdot v_\star \geq 1 $. Since, for all $i$, $ ( X _{\tau_{i+1}} - X _{\tau_{i}} ) \cdot v_\star \geq 1 $, taking the second regeneration times ensures this.

Define $W^{(n,k)}$ as the analogous of $W^{(n)}$ for the path obtained by concatenation of $(X_i)_{0 \le i \le h_{k}}$ with $(X_{\hat{\tau}_k + i}-X_{\hat{\tau}_k})_{i \ge 1}$. Note that, since $X$ has bounded steps,
\begin{equation}\label{e:bounddifpaths}
\sup_{t \ge 0} \left|W^{(n)}_t - W^{(n,k)}_t \right| \le \frac{ r_0 |\hat{\tau}_k-h_k| }{\sqrt{n}}.
\end{equation}
Moreover, $ ( X _{\hat{\tau}_k} - X_{h_k} ) \cdot v_\star \geq 1 $, $W^{(n,k)}$ is independent of $\sigma(\omega_x \colon\, k-1 \le x \cdot v_\star < k )$, and hence
\begin{equation}\label{apres_concat}
\Delta^{(n)}_{k} = \EE _0 \left[ F(W^{(n)}) - F(W^{(n,k)}) \middle| \mathcal{G}^{(n)}_{k}\right] - \EE _0 \left[ F(W^{(n)}) - F(W^{(n,k)}) \middle| \mathcal{G}^{(n)}_{k-1}\right] \; \text{ a.s.}
\end{equation}

Fix now an integer $K \ge 4 c_0 (\EE_0[\tau_1] \vee \EE_0[\tau_2-\tau_1])$ and a number $M_n > 0$. 
We will partition on the event $A^{(n)}_k:=\{\hat{\tau}_k-h_k \le M_n\} \cap \{\hat{\tau}_k \le Kn\}$ and its complement.
Using the Lipschitz property of $F$ and \eqref{e:bounddifpaths}--\eqref{apres_concat}, we obtain
\begin{align}\label{e:estim1}
\EE _0 \left[|\Delta^{(n)}_{k}|^2\right]
& \le \left\| \EE _0 \left[ F(W^{(n)}) - F(W^{(n,k)}), A^{(n)}_k \middle| \mathcal{G}^{(n)}_{k} \right] - \EE _0 \left[ F(W^{(n)}) - F(W^{(n,k)}), A^{(n)}_k \middle| \mathcal{G}^{(n)}_{k-1}\right] \right\|_2^2 \nonumber\\
& + 2 r_0 
\left\{ \EE _0 \left[\frac{\left|\hat{\tau}_k -h_k \right|^2}{n}, \hat{\tau}_k-h_k > M_n \right] 
+ \EE _0 \left[\frac{\left|\hat{\tau}_k -h_k \right|^2}{n}, \hat{\tau}_k > K n \right] \right\}.
\end{align}

Now we sum \eqref{e:estim1} on $k$, starting with the second terms. 
Let $\check{\tau}_{k}$ be the largest regeneration time smaller than $h_k$.
Using $ \hat{\tau} _{c_0 n} \leq \tau _{c_0 n + 1} $ we write
\begin{align}\label{e:estim4}
& \sum_{k=1}^{c_0 n} \EE _0 \left[\frac{\left|\hat{\tau}_k -h_k \right|^2}{n}, \hat{\tau}_k -h_k> M_n \right] 
 \le \frac{1}{ n (\ln M_n) ^{m - \frac{1}{\gamma}} } \sum_{k=1}^{c_0 n} \EE _0 \left[  \left| \hat{\tau}_k - \check{\tau}_k \right|^2 (\ln  \left| \hat{\tau}_k - \check{\tau}_k \right|)^{m - \frac{1}{\gamma}} \right] \nonumber \\
& \le \frac{1}{ n (\ln M_n)^{m - \frac{1}{\gamma}} } \sum_{k=1}^{c_0 n} \EE _0 \left[ |\tau_{k+1} - \tau_{k-1}|^2  (\ln |\tau_{k+1} - \tau_{k-1}| ) ^{m - \frac{1}{\gamma}} |(X_{\tau_k}-X_{\tau_{k-1}}) \cdot v_\star|  \right] \nonumber \\
& \le \frac{C}{ (\ln M_n)^{m - \frac{1}{\gamma}} } \EE _0 \left[ 
\tau_{2}^2  (\ln \tau_{2} ) ^{m - \frac{1}{\gamma}}
\|X_{\tau_{1}}\|  \right].
\end{align}
We claim that
\begin{equation}\label{e:holder}
\EE_0 \left[ \tau_2^2  (\ln \tau_2 ) ^{m - \frac{1}{\gamma}} \|X_{\tau_{1}}\|  \right] < \infty.
\end{equation}
Indeed, for integers $k \ge 2$ write
\begin{align}\label{estim_ineq1}
& \PP_0 \left( \tau_2^2 (\ln \tau_2)^{m-1/\gamma} \|X_{\tau_1}\| \ge k \right) \nonumber\\
& \quad \qquad \le \PP_0 \left(\|X_{\tau_1}\| \ge (\ln k /c)^{1/\gamma} \right) + \PP \left( \tau_2^2 (\ln \tau_2 )^{m-1/\gamma} \ge c^{1/\gamma} k (\ln k)^{-1/\gamma} \right).
\end{align}
The first term in \eqref{estim_ineq1} is summable by condition $(T)_\gamma$, so we only need to control the second. Let
\begin{equation}\label{deff}
f(x) := x (\ln x)^{1/\gamma}, \;\; x \in [e,\infty).
\end{equation}
Then $f$ is non-decreasing and
\[ f \left(c^{1/\gamma} k (\ln k)^{-1/\gamma} \right) \ge c_1 k\]
for some constant $c_1 > 0$ and all large enough $k \in \N$.
Furthermore, for any $x \ge e$,
\[ f \left(x^2 (\ln x)^{m-1/\gamma} \right) \le c_2 x^2 (\ln x)^m \]
for some other constant $c_2 > 0$.
Therefore, for large enough $k$ the second term in \eqref{estim_ineq1} is at most
\begin{equation}\label{estim_ineq2}
\PP_0 \left(f(\tau_2^2 (\ln \tau_2)^{m-1/\gamma}) \ge f(c^{1/\gamma} k (\ln k)^{1/\gamma}) \right)
\le \PP_0 \left( \tau_2^2 (\ln \tau_2 )^{m} \ge \frac{c_1}{c_2} k\right),
\end{equation}
which is summable since $\EE_0 \left[ \tau_2^2 (\ln \tau_2)^m \right] < \infty$ 
by the regeneration structure and our assumption on $\tau_1$.
To see this, note that, since the function $f(x) = x^2 (\ln x)^m$ is increasing on $[1,\infty)$,
$f(a+b) \le f(2 a\vee b) \le f(2a) + f(2b)$, and $f(2x) = 4 x^2 (\ln 2 + \ln x)^m$. This finishes the proof of \eqref{e:holder}.

To control the sum of the third terms in \eqref{e:estim1},
write, analogously to \eqref{e:estim4},
\begin{align}\label{e:estim6}
\sum_{k=1}^{c_0 n} \EE _0 \left[\frac{\left|\hat{\tau}_k -h_k \right|^2}{n}, \hat{\tau}_k > K n \right] 
& \le \frac{C}{n} \sum_{k=1}^{c_0 n} \EE _0 \left[ |\tau_{k+1} - \tau_{k-1}|^2\|X_{\tau_{k}}-X_{\tau_{k-1}}\|, \tau_{k+1} > K n  \right] \nonumber \\
& \le \frac{C}{n} \sum_{k=1}^{c_0 n} \Big\{ \EE _0 \left[ |\tau_{k+1} - \tau_{k-1}|^2 \|X_{\tau_{k}}-X_{\tau_{k-1}}\|, \tau_{k+1}-\tau_{k-1} > \frac{K n}{2}  \right]  \nonumber\\
& \qquad \quad \; \; + \EE _0 \left[ |\tau_{k+1} - \tau_{k-1}|^2 \|X_{\tau_{k}}-X_{\tau_{k-1}}\|, \tau_{k-1} > \frac{K n}{2} \right] \Big\} \nonumber\\
& \le \frac{C}{(\ln n)^{m-1/\gamma}} + C \EE_0 \left[|\tau_2|^2  \|X_{\tau_1}\|\right] \PP_0 \left( \tau_{c_0 n} > \frac{Kn}{2} \right),
\end{align}
where the last inequality is justified as follows: for the first term, perform a calculation similar to \eqref{e:estim1} and, for the second, use the regeneration property and the fact that $k-1 \le c_0 n$.
From \eqref{e:holder}, we obtain $\EE_0 \left[|\tau_2|^2 \| X_{\tau_1} \|  \right] < \infty$.
Moreover, since $\tau_{c_0 n}$ is a sum of $c_0n$ independent random variables with bounded second moment 
and first moment bounded by $K/(4c_0)$,
we have
\begin{equation}\label{e:estim7}
\PP_0 \left( \tau_{c_0 n} > \frac{Kn}{2} \right) 
\le \PP_0 \left( \tau_{c_0 n} - \EE_0[\tau_{c_0 n }] > \frac{K n }{4}\right) \le \frac{C}{n}.
\end{equation}

Now set
\begin{equation}
\tilde{\Delta}^{(n)}_k := \EE _0 \left[ F(W^{(n)}) - F(W^{(n,k)}), A^{(n)}_k \middle| \mathcal{G}^{(n)}_{k} \right] - \EE _0 \left[ F(W^{(n)}) - F(W^{(n,k)}), A^{(n)}_k \middle| \mathcal{G}^{(n)}_{k-1}\right].
\end{equation}
To control the sum on $k$ of the first terms in \eqref{e:estim1} means to control the sum of $\|\tilde{\Delta}^{(n)}_k\|_2^2$. 
We decompose these terms as follows.
Let
\begin{equation}
\begin{aligned}
\mathcal{H}_1^{(n)} & := \{x \colon \|x\|_{\infty} \le r_0 T n \text{ and } x \cdot v_\star < 1\}, \\
\mathcal{H}_k^{(n)}& := \{x \colon \|x\|_{\infty} \le r_0 T n \text{ and } k-1 \le x \cdot v_\star < k\}, \;\; k \ge 2.
\end{aligned}
\end{equation} 
Write $\mathcal{H}^{(n)}_k = \{z_1, \ldots, z_{N}\}$ where $N:=|\mathcal{H}^{(n)}_k|$,
and let
\begin{equation}
\begin{aligned}
\mathcal{G}^{(n)}_{k,0} & := \mathcal{G}^{(n)}_{k-1}, \\
\mathcal{G}^{(n)}_{k,j} & := \mathcal{G}^{(n)}_{k-1} \vee \sigma\{ \omega_{z_i} \colon\, i \le j\}, \;\; j = 1, \ldots, N.
\end{aligned}
\end{equation}
We have
\begin{equation}\label{e:2nddecomposition}
\tilde{\Delta}_k^{(n)} = \sum_{j = 1}^{N} \tilde{\Delta}^{(n)}_{k,j}
\end{equation}
where
\begin{equation}\label{def_tildedeltankj}
\tilde{\Delta}^{(n)}_{k,j} := \EE _0 \left[ F(W^{(n)}) - F(W^{(n,k)}), A^{(n)}_k \middle| \mathcal{G}^{(n)}_{k,j} \right] - \EE _0 \left[ F(W^{(n)}) - F(W^{(n,k)}), A^{(n)}_k \middle| \mathcal{G}^{(n)}_{k,j-1}\right]
\end{equation}
are still martingale increments. Thus
\begin{equation}\label{e:estim2}
\|\tilde{\Delta}^{(n)}_k \|_2^2 := \sum_{j=1}^N \|\tilde{\Delta}^{(n)}_{k,j} \|_2^2.
\end{equation}

Let $h_{z_j}$ be the hitting time of a point $z_j \in \mathcal{H}^{(n)}_k$. 
Note that, on $A^{(n)}_k$, if
$h_{z_j} \ge Kn$ then $h_{z_j} = \infty$,
and on the latter event
the integrands in \eqref{def_tildedeltankj} do not depend on $\omega_{z_j}$. 
Hence

\begin{equation}
\begin{aligned}
\tilde{\Delta}^{(n)}_{k,j} 
& = \EE _0 \left[ F(W^{(n)}) - F(W^{(n,k)}), A^{(n)}_k, h_{z_j}  < Kn \,\middle|\, \mathcal{G}^{(n)}_{k,j} \right] \\
& - \EE _0 \left[ F(W^{(n)}) - F(W^{(n,k)}), A^{(n)}_k, h_{z_j} < Kn \,\middle|\, \mathcal{G}^{(n)}_{k,j-1}\right].
\end{aligned}
\end{equation}
By the Lipschitz property of $F$, \eqref{e:bounddifpaths} and the definition of $A^{(n)}_k$, we have
\begin{equation}
|\tilde{\Delta}^{(n)}_{k,j}| \le r_0 \frac{M_n}{\sqrt{n}} \left( \mathbb{P} _0 \left( h_{z_j} < Kn \,\middle|\, \mathcal{G}^{(n)}_{k,j}\right) + \mathbb{P} _0 \left( h_{z_j} < Kn \,\middle|\, \mathcal{G}^{(n)}_{k,j-1}\right) \right),
\end{equation}
and thus, by the Cauchy-Schwartz inequality,
\begin{equation}\label{e:estim3}
\|\tilde{\Delta}^{(n)}_{k,j}\|_2^2 
\le 2 r_0 ^2 \frac{M^2_n}{n} \EE _0 \left[ \mathbb{P} _0 \left( h_{z_j} < Kn \,\middle|\, \omega \right)^2 \right].
\end{equation}
Summing \eqref{e:estim3} on $j$ and $k$ and using \eqref{e:estim2}, we get
\begin{align}\label{e:estim8}
\sum_{k=1}^{c_0 n} \|\tilde{\Delta}^{(n)}_k\|_2^2
& \le C \frac{M_n^2}{n} \sum_{k=1}^{c_0 n} \sum_{z \in \mathcal{H}^{(n)}_k} \EE _0 \left[ \mathbb{P} _0 \left( h_{z} < Kn \,\middle|\, \omega \right)^2 \right] \nonumber\\
& = C \frac{M_n^2}{n}\EE_0 \left[ Q_{Kn} \right],
\end{align}
where $Q_n$ is the number of intersections of two independent copies of the walk $X$ in the same random environment as defined in \eqref{defQn}.

Finally, gathering the results in \eqref{e:estim0}, \eqref{e:estim1}--\eqref{e:holder}, \eqref{e:estim6}--\eqref{e:estim7} and \eqref{e:estim8}, we conclude
\begin{align}\label{e:estim5}
\left\| \EE _0 \left[ F(W^{(n)})\middle| \omega \right] - \EE _0 \left[ F(W^{(n)})\right] \right\|_2^2
\le C \left\{ \frac{1}{(\ln M_n)^{m - \frac{1}{\gamma}}} + \frac{1}{(\ln n)^{m - \frac{1}{\gamma}}} + \frac{1}{n} 
+ \frac{M^2_n}{n} \EE _0 \left[ Q_{Kn}\right] \right\}.
\end{align}
Taking $M_n = n^{\delta/2}$, we obtain theorem~\ref{theo:boundbyintersec}.

\section{Proof of theorem~\ref{theo:boundonintersections}}
\label{sec:boundonintersections}

We want to bound $\EE_0\left[Q_n \right]$, where $Q_n$ represents the number of intersections of two independent copies of $X$ in the same random environment $\omega$ up to time $n$. We note $X$ and $\tilde{X}$ the two independent walks driven by a common environment, then (recall \eqref{defQn})
\[ \EE_0 \left[Q_n \right] = \EE _{0,0} \left[ | X_{[0,n)} \cap \tilde{X}_{[0,n)} | \right] .\]

In this section, we will reduce to $ v_\star = e_i $ for convenience. This is possible because we will not make use of the integrability condition on $\tau_1$, and because the $(T)_\gamma$ hypothesis still holds for all all $e_i$ such that  $ v_\star \cdot e_i > 0 $ thanks to the following result:

\begin{pr}[Theorem 2.4 of \cite{CaRa13}]
Consider a RWRE in an elliptic i.i.d.\ environment. Let $ l \in \R ^d \setminus \{ 0 \} $. Then for all $ 0 < \gamma \leq 1 $ the following are equivalent.
\begin{itemize}
\item[(i)] Condition $ (T)_\gamma $ is satisfied in direction $l$.
\item[(ii)] There is an asymptotic direction $v$ such that $ l \cdot v > 0 $ and for every $l'$ such that $ l' \cdot v > 0 $ one has that $ (T)_\gamma $ is satisfied in direction $l'$.
\end{itemize}
\end{pr}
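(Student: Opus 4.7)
The direction (ii)$\Rightarrow$(i) is immediate by taking $l' = l$, for then $l \cdot v > 0$ by hypothesis. The content is therefore (i)$\Rightarrow$(ii), which I would establish in two stages.

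First, I would produce the asymptotic direction $v$. Under $(T)_\gamma$ in direction $l$, the displacement $\sup_{n \le \tau_1^l} \|X_n\|$ has a $\gamma$-exponential moment, so in particular $\EE_0[\tau_1^l] < \infty$. The Sznitman--Zerner law of large numbers then yields $X_n/n \to \hat v$ almost surely with $\hat v \cdot l > 0$; setting $v := \hat v/|\hat v|$ produces an asymptotic direction satisfying $l \cdot v > 0$.

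Second, given $l'$ with $l' \cdot v > 0$, I would prove $(T)_\gamma$ in direction $l'$. Transience in direction $l'$ follows directly from the LLN, so the key task is the tail estimate
\[
\PP_0 \left( \sup_{n \le \tau_1^{l'}} \|X_n\| > u \right) \le C \exp(-c u^\gamma).
\]
The plan is to dominate the $l'$-regeneration structure by the $l$-one. Writing $(\tau_k^l)_{k\ge1}$ for the $l$-regeneration times, the increments $(X_{\tau_{k+1}^l} - X_{\tau_k^l},\; \tau_{k+1}^l - \tau_k^l)$ are i.i.d.\ with $\gamma$-exponential tails. Since $\hat v \cdot l' > 0$, after order $u$ consecutive $l$-regenerations the walk typically advances by order $u$ in direction $l'$ without ever backtracking, which forces an $l'$-regeneration. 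Quantitatively, on $\{\sup_{n \le \tau_1^{l'}}\|X_n\| > u\}$ at least one of the following must occur: (a) some individual $l$-regeneration block has displacement larger than $\varepsilon u$; (b) the empirical mean of the first $Cu$ block increments, projected on $l'$, deviates from $\hat v \cdot l'$ by a fixed positive amount; (c) inside one of those $Cu$ blocks the walk executes a large $l'$-backtracking excursion. Event (a) is handled by a union bound together with $(T)_\gamma$ in direction $l$; event (b) by a Cram\'er-type estimate for sums of i.i.d.\ variables with $\gamma$-exponential tails, which gives a decay of order $\exp(-c u^\gamma)$; event (c) by the $\gamma$-exponential moment of $\sup_{n \le \tau_1^l}\|X_n\|$ together with a union bound over the $Cu$ blocks.

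The main obstacle is the regime in which $l' \cdot v$ is small positive: then the number of $l$-blocks needed to force an $l'$-regeneration blows up and the constants in (b) become delicate, since one must sum a Cram\'er-type bound over a window whose size depends on $(l' \cdot v)^{-1}$. The cleanest route around this, and the one actually followed in [CaRa13], is to pass through the equivalent polynomial condition $(P)_M$ (cf.\ [BeDrRa12] in the uniformly elliptic setting): $(P)_M$ propagates to any $l'$ with $l' \cdot v > 0$ by an iterative argument on a finite cover of the open half-sphere $\{l' \cdot v > 0\}$, each step moving by a small angle for which the above domination argument is quantitatively safe; once $(P)_M$ holds in direction $l'$, the elliptic assumption lets one reconstruct $(T)_\gamma$ in that direction, closing the equivalence.
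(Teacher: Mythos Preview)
Your sketch contains a genuine gap in the very first stage, and it is precisely the issue that motivates the paper you are reading. You write that under $(T)_\gamma$ in direction $l$, ``the displacement $\sup_{n\le\tau_1^l}\|X_n\|$ has a $\gamma$-exponential moment, so in particular $\EE_0[\tau_1^l]<\infty$''. This implication is false without uniform ellipticity: condition $(T)_\gamma$ controls the \emph{spatial extent} of the walk up to $\tau_1$, not the \emph{time} $\tau_1$ itself. In a merely elliptic environment the walk can sit in a trap of small spatial extent for an arbitrarily long time, so a stretched-exponential moment on the displacement gives no integrability for $\tau_1$. Indeed, the whole purpose of the surrounding paper is to handle situations where $(T)_\gamma$ holds yet only low moments of $\tau_1$ are available (e.g.\ Dirichlet environments with $\kappa$ barely larger than $2$). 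Consequently the Sznitman--Zerner LLN is unavailable to you, and your construction of the asymptotic direction via $X_n/n\to\hat v$ does not get off the ground. This also undermines your second stage, since your quantity $\hat v\cdot l'$ is defined through the same LLN.

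The paper's own proof is not a self-contained argument but a pointer to the literature: the asymptotic direction comes from Simenhaus \cite{Si07}, whose result gives $X_n/|X_n|\to v$ from $(T)_\gamma$ \emph{without} any moment assumption on $\tau_1$ (it proceeds through directional transience and a zero--one law rather than an LLN), and the propagation of $(T)_\gamma$ to all $l'$ with $l'\cdot v>0$ is Theorem~1.1 of Sznitman \cite{Sz02}, whose proof is observed not to rely on uniform ellipticity. Your block-decomposition idea for the second stage is in the right spirit and can be made to work once you replace $\hat v$ by the mean of the \emph{spatial} regeneration increment $\EE_0[X_{\tau_1}\mid D^{v_\star}=\infty]$ (which is finite under $(T)_\gamma$ and parallel to $v$), but the first stage must be repaired by invoking an asymptotic-direction argument that does not pass through integrability of $\tau_1$.
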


\begin{proof}
This result appears as theorem 2.4 of \cite{CaRa13}. It is primarily a consequence of theorem 1 of \cite{Si07}, that gives the existence of an asymptotic direction under $ (T)_\gamma $, and of theorem 1.1 of \cite{Sz02}. Theorem 1.1 of \cite{Sz02} is stated in the uniformly elliptic case, but the proof does not depend on it.
\end{proof}

We define the backtracking times $\beta$ and $ \tilde{\beta} $ for the walks $X$ and $ \tilde{X} $ as $\beta = \inf \{ n \geq 1 : X_n \cdot v_\star < X_0 \cdot v_\star \} $ and $ \tilde{\beta} = \inf \{ n \geq 1 : \tilde{X}_n \cdot v_\star < \tilde{X}_0 \cdot v_\star \} $. When the walks are on a common level, their difference lies in the hyperplane $ \V_d = \{ z \in \Z ^d : z \cdot v_\star = 0 \}$.

\subsection{Construction of joint regeneration times}

Lemma~7.1 of \cite{RaSe09} gives us that from a common level, there is a uniform positive probability $\eta$ for simultaneously never backtracking:

\begin{lem}[Lemma~7.1 of \cite{RaSe09}] \label{lem:backtrack}
Assume $ v_\star $ transience and the bounded step hypothesis (S). Then
\[ \eta = \inf _{x-y \in \V_d} \PP_{x,y} ( \beta \wedge \tilde{\beta} = \infty ) > 0 .\]
\end{lem}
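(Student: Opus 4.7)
The plan is to combine the one-walk escape estimate with a decoupling-at-large-separation argument, then close by compactness at short separation. By $v_\star$-transience, Sznitman and Zerner's theorem gives $\alpha := \PP_0(\beta = \infty) > 0$, and by translation invariance of $\PP$ this extends to $\PP_x(\beta = \infty) = \alpha$ for every $x$. The key structural observation is that a non-backtracking trajectory from $x$ only visits sites at level $\geq L := x \cdot v_\star$, so $P_{x, \omega}(\beta = \infty)$ is measurable with respect to $\omega$ restricted to the closed half-space $H_L = \{z \colon z \cdot v_\star \geq L\}$.

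For two walks at a common level $L$, conditioning on $\omega$ renders $X$ and $\tilde X$ independent, and I obtain
\[
\PP_{x,y}(\beta \wedge \tilde\beta = \infty) = \EE\bigl[P_{x, \omega}(\beta = \infty)\, P_{y, \omega}(\tilde\beta = \infty)\bigr].
\]
By invariance of $\PP$ along $\V_d$, this is a function only of $z := x - y \in \V_d$; call it $\varphi(z)$. I would then distinguish two regimes. For $|z|$ bounded, there are only finitely many configurations up to translation, and positivity of $\varphi(z)$ for each fixed $z$ follows from quenched $v_\star$-transience, which implies $P_{x, \omega}(\beta = \infty) > 0$ for $\PP$-a.e.\ $\omega$ via a strong-Markov argument at successive visits to the starting level. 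For $|z|$ large, the two walks should decouple: using the bounded-step condition $(S)$ and an annealed hitting/intersection estimate derived from the single-walk bound $\alpha > 0$, the probability that the two trajectories visit any common site tends to $0$ as $|z| \to \infty$, whence $\varphi(z) \to \alpha^2$.

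The main obstacle is the quantitative decoupling at large $|z|$. Since both quenched factors live on the same environment $\omega|_{H_L}$, they are not a priori independent; one must couple the pair $(X, \tilde X)$ in environment $\omega$ to two walks run in two independent copies of the environment restricted to slabs around each walk's horizontal projection, and bound the coupling error by the probability of visiting the overlap region. That error is controlled by a Green-function/intersection bound using $(S)$, which gives $\varphi(z) \geq \alpha^2/2$ once $|z|$ exceeds some threshold $R$. Combining this with the pointwise positivity for $|z| \leq R$ yields the uniform lower bound $\eta > 0$ stated in the lemma.
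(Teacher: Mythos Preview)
The paper does not prove this lemma; it simply quotes it as Lemma~7.1 of \cite{RaSe09}. So there is no in-paper proof to compare against, only the reference. That said, your proposal has genuine gaps in both halves of the argument.

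\textbf{Pointwise positivity for small $|z|$.} Your claim that $P_{x,\omega}(\beta=\infty)>0$ for $\PP$-a.e.\ $\omega$ is false under only transience and $(S)$. Take $v_\star=e_1$ and suppose $\PP(\omega_{0,-e_1}=1)>0$; this is compatible with transience in direction $e_1$ (the walk is forced one step left from the origin but can still drift to $+\infty$ afterwards). On this event $P_{0,\omega}(\beta=\infty)=0$. Your ``strong-Markov argument at successive visits to the starting level'' does not repair this: if $P_{0,\omega}(\beta=\infty)=0$, the walk a.s.\ reaches level $<0$, but when it next returns to level $0$ it is at a \emph{different} site $X_{S}$, and nothing in your hypothesis says $P_{X_S,\omega}(\beta=\infty)=0$; you cannot iterate. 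So from this step you do not get $\varphi(z)>0$ for each fixed $z$, which is exactly what you need for the ``finitely many $z$'' conclusion.

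\textbf{Decoupling for large $|z|$.} Your conclusion $\varphi(z)\to\alpha^2$ is correct, but the mechanism you propose --- a coupling controlled by an ``annealed hitting/intersection estimate'' or a ``Green-function bound using $(S)$'' --- is not available under only transience and bounded steps: there is no control on transverse excursions, so no intersection bound follows. The clean way to get the limit is purely measure-theoretic: $f(\omega):=P_{0,\omega}(\beta=\infty)$ lies in $L^2(\PP)$, local (cylinder) functions are dense in $L^2$, and for local $f_\epsilon$ the variables $f_\epsilon$ and $f_\epsilon\circ\theta_z$ are independent once $|z|$ exceeds the diameter of the support. This yields $\EE[f\cdot(f\circ\theta_z)]\to(\EE f)^2=\alpha^2$ with no path estimates at all.

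In summary, your large-$|z|$ step can be salvaged by the mixing argument, but your small-$|z|$ step rests on a claim that is false in the stated generality; establishing $\varphi(z)>0$ for each fixed $z$ requires a different argument (this is where the actual work in \cite{RaSe09} lies).
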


We now introduce some additional notations. Set $ \gamma _l = \inf \{ n \geq 0 : X_n \cdot v_\star \geq l \} $ and $ \tilde{\gamma} _l = \inf \{ n \geq 0 : \tilde{X}_n \cdot v_\star \geq l \} $ the reaching times of level $l$ in direction $ v_\star $.

Let $h$ be the following greatest common divisor:
\begin{equation}
\label{e:defh}
h := \text{gcd} \{ l \geq 0 : \PP ( \exists n : X_n \cdot v_\star = l ) > 0 \} .
\end{equation}
Following the steps of \cite{RaSe09}, we get the following bound on joint fresh levels of two walks reached without backtracking:

\begin{lem}[Lemma~7.4 of \cite{RaSe09}] \label{lem:defl2}
There exists a finite $l_2$ such that we can find a constant $ c > 0 $ satisfying: uniformly over all $x$ and $y$ such that $ x \cdot v_\star, y \cdot v_\star \in [0, r_0 |v_\star|] \cap h \Z $,
\[ \PP _{x,y} \left( \exists i: ih \in [ 0 , l_2 h ] , X _{\gamma _{ih}} \cdot v_\star = \tilde{X} _{\tilde{\gamma} _{ih}} \cdot v_\star = ih, \beta > \gamma _{ih} , \tilde{\beta} > \tilde{\gamma} _{ih} \right) \geq c .\]
\end{lem}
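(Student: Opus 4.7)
The plan is to follow Lemma~7.4 of \cite{RaSe09}: first prove a single-walk reachability statement, then upgrade it to a joint bound by exploiting the i.i.d.\ structure of $\omega$. The delicate point is the single-walk step, which converts the abstract gcd identity \eqref{e:defh} into concrete reachability of a prescribed window of target levels.

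For the single-walk step, I aim to show that there exist $K_0, K_1 \in \N$ and $c_0 > 0$ such that for every $x$ with $x \cdot v_\star \in [0, r_0|v_\star|] \cap h\Z$ and every integer $k \in [K_0, K_0 + K_1]$,
\[
\PP_x\bigl(X_{\gamma_{kh}} \cdot v_\star = kh,\ \beta > \gamma_{kh}\bigr) \geq c_0.
\]
The set $S := \{\ell \geq 0 : \PP_0(X_{\gamma_\ell} \cdot v_\star = \ell,\ \beta > \gamma_\ell) > 0\}$ is closed under addition by the strong Markov property applied at the hitting times, and $\gcd(S) = h$ by combining \eqref{e:defh} with $v_\star$-transience (transience plus bounded steps lets one pay a uniform price to avoid backtracking below any level already hit exactly). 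A Bezout-type argument then forces every sufficiently large multiple of $h$ to lie in $S$, and translation invariance of $\PP$ removes the dependence on the starting point $x$ whose projection lies in a bounded set.

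For the joint step, the identity
\[
\PP_{x,y}(B_i \cap \tilde B_i) = \EE\bigl[P_{x,\omega}(B_i)\,P_{y,\omega}(\tilde B_i)\bigr],
\]
with $B_i := \{X_{\gamma_{ih}} \cdot v_\star = ih,\ \beta > \gamma_{ih}\}$ and $\tilde B_i$ its analogue for $\tilde X$, does not immediately factorise because the two factors are correlated through the common environment. I would work with the path expansion of each single-walk probability, restricted to a sub-collection of pairs $(\pi, \tilde\pi)$ in which, at every vertex $z$ visited by both paths, $\pi$ and $\tilde\pi$ prescribe the same outgoing step. On such pairs the joint weight becomes a product of terms $\omega_{z, e(z)}^{k_z}$ with $k_z \in \{1, 2\}$; Jensen's inequality controls the $k_z = 2$ contributions by the squared marginal means, yielding a bound that factorises, up to a constant, into a product of single-walk probabilities. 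Summing over the chosen sub-collection produces a joint lower bound of order $c_0^2$.

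Finally, setting $l_2 := \lceil r_0|v_\star|/h \rceil + K_0 + K_1 + 1$ ensures that, for any admissible starting points $x, y$ with projections $ah, bh$, at least one integer $i \in [\max(a, b) + K_0,\, l_2]$ falls in the valid window for both single-walk bounds, producing the uniform constant $c$ claimed by the lemma. The main obstacle is really Step~1: the abstract gcd identity \eqref{e:defh} must be upgraded to an effective, uniform reachability statement over a prescribed window of target levels. The joint construction of Step~2 is then fairly mechanical once Step~1 is in hand.
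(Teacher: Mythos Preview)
The paper does not give its own proof of this lemma; it is quoted directly from \cite{RaSe09} (Lemma~7.4 there) and used as a black box. So the comparison is really with the argument in \cite{RaSe09}.

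Your Step~1 (single-walk reachability via the additive semigroup $S$ and a Schur/B\'ezout argument) is the right skeleton and matches \cite{RaSe09}. The passage from the gcd in \eqref{e:defh} to $\gcd S = h$ does need a line of work---one has to check that ``$X$ reaches level $\ell$ at some time'' can be upgraded to ``$X$ reaches level $\ell$ exactly at first passage without backtracking, with positive probability''---but this is routine under transience and $(S)$.

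Step~2 has a real gap. Jensen does give, for each \emph{individual} agreeing pair $(\pi,\tilde\pi)$, the bound
\[
\PP_{x,y}\bigl(X\sim\pi,\ \tilde X\sim\tilde\pi\bigr)\ \ge\ \PP_x(\pi)\,\PP_y(\tilde\pi),
\]
but summing this over the agreeing sub-collection does \emph{not} recover anything of order $c_0^2$. The restriction can kill essentially all the mass: when $x=y$, ``agree at every common vertex'' forces $\pi=\tilde\pi$ step by step, so your sum collapses to $\sum_\pi \PP_x(\pi)^2$, and since infinitely many paths contribute (the time $\gamma_{ih}$ is unbounded) this has no positive lower bound in terms of $c_0=\sum_\pi\PP_x(\pi)$ alone. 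The way \cite{RaSe09} (and the standard fix) handles the joint step is different: first restrict to paths of a fixed bounded length realising the single-walk event with probability $\ge c_0/2$; then split on $|x-y|$. If $|x-y|$ exceeds a constant multiple of that length, one can choose \emph{disjoint} paths from $x$ and $y$, and independence of the environment over disjoint vertex sets gives the product bound directly. If $|x-y|$ is below that threshold, translation invariance and discreteness leave only finitely many values of $y-x$, and for each one a single explicit pair of paths (agreeing where they meet) has positive joint annealed probability. No summation over path families is needed, and the resulting constant is uniform but typically much smaller than $c_0^2$.
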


We denote by $ \tau_k $ and $ \tilde{\tau}_k $ the regeneration times in direction $v_\star$ for the walks $X$ and $\tilde{X}$ (see \cite{SzZe99} for their explicit construction). We will now define the joint regeneration level of the two walks. For this, we cannot use the construction of \cite{RaSe09}, because it would not give a condition similar to condition $(T)_\gamma$ for the joint regeneration times, and we need such a condition later in the proof.

We then adapt their method, and begin by defining some new walks. They consist mainly in a time-change of the walks $X$ and $\tilde{X}$, that "stops" the walk that is the most advanced in direction $v_\star$, until the other walk outdistances it. Concretely, we construct the walks $ \underline{X} $ and $ \underline{\tilde{X}} $ and the times $ x_k $ and $\tilde{x}_k$ as follows: $ \underline{X}_0 = X_0 $, $ \underline{\tilde{X}}_0 = \tilde{X}_0 $, $ x_0 = \tilde{x}_0 = 0 $ and
\[ \left\{
\begin{array}{rcl}
x_{k+1} & = & x_k + \mathbb{1} _{ \{ \max _{i \leq k} ( \underline{X}_i \cdot v_\star ) \leq \max _{i \leq k} ( \underline{\tilde{X}}_i \cdot v_\star ) \} }  \\
\tilde{x}_{k+1} & = & \tilde{x}_k + \mathbb{1} _{ \{ \max _{i \leq k} ( \underline{X}_i \cdot v_\star ) > \max _{i \leq k} ( \underline{\tilde{X}}_i \cdot v_\star ) \} } \\
\underline{X}_{k+1} & = & X_{x_{k+1}}  \\
\underline{\tilde{X}}_{k+1} & = & \tilde{X} _{\tilde{x}_{k+1}}
\end{array}
\right.
.\]
Note that by construction, only one of the walks $ \underline{X} $ and $ \underline{\tilde{X}} $ moves at each step: at the $n$-th step, only the walk which corresponds to the smaller value between $ \max _{i \leq n} \{ \underline{X}_i \cdot v_\star \} $ and $ \max _{i \leq n} \{ \underline{\tilde{X}}_i \cdot v_\star \} $ moves.

We can now adapt the construction of \cite{RaSe09} to those new walks. Set $ \underline{\gamma} _l = \inf \{ n \geq 0 : \underline{X}_n \cdot v_\star \geq l \} $, $ \underline{\tilde{\gamma}} _l = \inf \{ n \geq 0 : \underline{\tilde{X}}_n \cdot v_\star \geq l \} $, $\underline{\beta} = \inf \{ n \geq 1 : \underline{X}_n \cdot v_\star < \underline{X}_0 \cdot v_\star \} $ and $ \underline{\tilde{\beta}} = \inf \{ n \geq 1 : \underline{\tilde{X}}_n \cdot v_\star < \underline{\tilde{X}}_0 \cdot v_\star \} $. We suppose that $ \underline{X} $ and $ \underline{\tilde{X}} $ start on a common level $ \lambda_0 \in h \Z $ (where $h$ is as in (\ref{e:defh})). We then define
\[ J = 
\left\{
\begin{array}{ll}
\sup \{ \underline{X}_i \cdot v_\star , \underline{\tilde{X}}_i \cdot v_\star : i \leq \underline{\beta} \wedge \underline{\tilde{\beta}} \} + h  & \text{ if } \underline{\beta} \wedge \underline{\tilde{\beta}} < \infty \\
\infty & \text{ if } \underline{\beta} \wedge \underline{\tilde{\beta}} = \infty
\end{array}
\right.
 \]
and 
\[ \lambda = 
\left\{
\begin{array}{ll}
\inf \{ l \geq J : \underline{X} _{\underline{\gamma}_l} \cdot v_\star = \underline{\tilde{X}} _{\underline{\tilde{\gamma}}_l} \cdot v_\star = l  \} & \text{ if } J < \infty \\
\infty & \text{ if } J = \infty
\end{array}
\right.
 .\]
In the case $ \lambda < \infty $, $ \lambda $ represents the first common fresh level after one backtrack. The case $ \lambda = \infty $ means that neither walk backtracked. We then define $ \lambda_1 $ the first common fresh level strictly after $\lambda_0$:
\[ \lambda_1 := \inf \{ l \geq \lambda_0 + h : \underline{X} _{\underline{\gamma}_l} \cdot v_\star = \underline{\tilde{X}} _{\underline{\tilde{\gamma}}_l} \cdot v_\star = l \} .\]
Lemma~\ref{lem:defl2} (which adapts immediately to the walks $\underline{X}$ and $\underline{\tilde{X}}$) ensures that $ \lambda_1 < \infty $.

We then construct $\lambda_n $ recursively for $n \geq 2$ as follows: if $ \lambda_{n-1} < +\infty $, we set
\[ \lambda_n := \lambda \circ \theta ^{ \underline{\gamma} _{\lambda_{n-1}} , \underline{\tilde{\gamma}} _{\lambda_{n-1}} }  \]
where $ \theta ^{m,n} $ represents the shift of sizes $m$ and $n$ on the pairs of paths.

For all $ n \geq 1 $, we say there is a joint regeneration at level $\lambda_n$ if $ \lambda_{n+1} = \infty $. It gives the following definition of the first joint regeneration level:
\[ \Lambda := \sup \{ \lambda_n : \lambda_n < \infty \} .\]

By construction, lemmas~\ref{lem:backtrack} and \ref{lem:defl2} also give that $ \Lambda < \infty $ a.s.. It allows to define the joint regeneration times for the walks $X$ and $\tilde{X}$:
\[ ( \mu_1 , \tilde{\mu}_1 ) = ( \gamma _\Lambda , \tilde{\gamma} _\Lambda ) .\]
We then get recursively the sequence:
\[ ( \mu_{i+1} , \tilde{\mu}_{i+1} ) = ( \mu_i , \tilde{\mu}_i ) + ( \mu_1 , \tilde{\mu}_1 ) \circ \theta ^{ \mu_i , \tilde{\mu}_i } .\]

\begin{rmk}
We could also define the joint regeneration times $ ( \mu_k, \tilde{\mu}_k ) $ as $ \mu_0 = \tilde{\mu}_0 = 0 $ and:
\begin{equation}\label{e:defmu}
( \mu_{k+1}, \tilde{\mu}_{k+1} )  := \inf_{n,m} \{ (\tau_n, \tilde{\tau}_m ): \tau_n > \mu_k ,   \tilde{\tau}_m > \tilde{\mu}_k , X _{\tau_n} \cdot v_\star = \tilde{X} _{\tilde{\tau}_m} \cdot v_\star \} ,
\end{equation}
and the first joint regeneration level as:
\[ \Lambda = X_{\mu_1} \cdot v_\star = \tilde{X} _{\tilde{\mu}_1} \cdot v_\star .\]
Those two definitions are equivalent, the interest of our previous construction by stages being that it will be easier to handle in the proofs. It also appears that the first joint regeneration level corresponds to the first level for which both walks $X$ and $\tilde{X}$ hit this level and regenerate.
\end{rmk}

We will now show that those joint regeneration times give us a bound similar to condition~$(T)_\gamma$. This will be the aim of the two following lemmas.

\begin{lem}
\label{lem:boundLambda}
For all $ m , p \geq 1 $, there exists a constant $ 0 < C_p < \infty $ depending only on $p$ such that:
\[ \sup _{ x,y \in \V_d } \PP_{x,y} ( \Lambda > m ) \leq C_p m ^{-p} .\]
\end{lem}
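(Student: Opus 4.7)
The plan is to write $\Lambda = \lambda_N$ with $N := \min\{n \ge 1 : \lambda_{n+1} = \infty\}$ and then separately control $N$ (stochastically dominated by a geometric variable) and each stage increment $\Delta_k := \lambda_{k+1} - \lambda_k$ on $\{\lambda_k < \infty\}$ (stretched-exponential tails of exponent $\gamma$). For the geometric bound on $N$: conditionally on $\{\lambda_k < \infty\}$, both walks have reached the common fresh level $\lambda_k$ without ever visiting higher, so the environment above $\lambda_k$ is still i.i.d.\ and independent of the past. Applying the strong Markov property to the pair $(X, \tilde X)$ at the hitting times of $\lambda_k$, together with Lemma~\ref{lem:backtrack}, the event $\{\lambda_{k+1} = \infty\}$, which amounts to neither walk ever backtracking below $\lambda_k$, has conditional probability at least $\eta > 0$ uniformly in the past. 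An induction then yields $\PP_{x,y}(N > n) \le (1-\eta)^n$.

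For the tail bound on $\Delta_k$, I decompose the increment as $(J_k - \lambda_k) + (\lambda_{k+1} - J_k)$. The first piece is bounded by the supremum in direction $v_\star$ of the two walks before the backtrack at time $\underline{\beta} \wedge \underline{\tilde\beta}$; condition $(T)_\gamma$, in its well-known equivalent formulation as an estimate on the probability to exit a slab through the wrong side, implies that each walk's supremum in direction $v_\star$ before a backtrack has tails bounded by $C e^{-c' x^\gamma}$. The second piece is handled by iterating Lemma~\ref{lem:defl2}: by the time-change construction, the maxima of $\underline X$ and $\underline{\tilde X}$ in direction $v_\star$ stay within $r_0 |v_\star|$ of each other, so the lemma applies on consecutive slabs of height $l_2 h$ stacked above $J_k$; each such slab produces a common fresh level with probability at least $c$, so the number of slabs needed is geometric and $\lambda_{k+1} - J_k$ has exponential tails. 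Combining, $\PP_{x,y}(\Delta_k > x) \le C e^{-c'' x^\gamma}$ uniformly in $k$ and the starting configuration.

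Given $p \ge 1$, pick $a$ so large that $(1-\eta)^{a} \le e^{-p}$. Since $\Lambda \le N \cdot \max_{k < N} \Delta_k$, a union bound gives
\[
\PP_{x,y}(\Lambda > m) \le \PP_{x,y}(N > a \log m) + \sum_{k=0}^{\lceil a \log m \rceil}\PP_{x,y}\!\left(\Delta_k > \frac{m}{a \log m},\, \lambda_k < \infty \right)\le m^{-p} + C (\log m)\, e^{-c''' (m/\log m)^\gamma},
\]
the right-hand side of which is $O(m^{-p})$, which is the required bound. The main obstacle lies in Step~1: justifying rigorously the renewal-like structure, namely that on $\{\lambda_k < \infty\}$ the pair $(X, \tilde X)$ can genuinely be restarted at level $\lambda_k$ with the environment above it being fresh i.i.d., despite the walks sharing a single environment. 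This is precisely why the stages were constructed via the time-changed walks $\underline X, \underline{\tilde X}$ rather than directly from the $X, \tilde X$ regeneration structure, and one must verify that the time change respects the conditional independence needed. A similar care is required in Step~2 when iterating Lemma~\ref{lem:defl2} across successive slabs, where the starting positions depend on the outcomes of previous attempts.
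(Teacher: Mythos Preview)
Your proposal is correct and follows essentially the same route as the paper, which in turn defers to Lemma~7.5 of \cite{RaSe09}: decompose $\Lambda$ into stages $\lambda_k$, control the number of stages geometrically via Lemma~\ref{lem:backtrack}, and bound each stage increment by splitting into the overshoot before backtrack (controlled via $(T)_\gamma$) and the subsequent search for a common fresh level (controlled via Lemma~\ref{lem:defl2}). The only cosmetic difference is in how $(T)_\gamma$ is invoked for the first piece: the paper observes directly that on $\{\underline\beta\wedge\underline{\tilde\beta}<\infty\}$ one has $M_{\underline\beta\wedge\underline{\tilde\beta}}\le C(\sup_{n\le\tau_1}\|X_n\|+\sup_{n\le\tilde\tau_1}\|\tilde X_n\|)$, because the time-change keeps the two maxima within $r_0$ of each other and the backtracking walk cannot yet have regenerated, whereas you phrase the same bound via the slab-exit formulation of $(T)_\gamma$.
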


\begin{proof}

The proof is very similar to the proof of lemma~7.5 in \cite{RaSe09}. We first remark that the walks $ \underline{X} $ and $ \underline{\tilde{X}} $ follow exactly the same paths as $X$ and $\tilde{X}$, and therefore $ X _{\gamma_i} = \underline{X} _{\underline{\gamma}_i} $ and $ \tilde{X} _{\tilde{\gamma}_i} = \underline{\tilde{X}} _{\underline{\tilde{\gamma}}_i} $ for all $i$. It means that we can replace $X$ and $ \tilde{X} $ by $ \underline{X} $ and $ \underline{\tilde{X}} $ with no modification of the proof.

The only difference remaining is that in \cite{RaSe09}, they need the regeneration times $ \tau_i $ and $ \tilde{\tau}_i $ to have finite $p$-moments (which means they need $ p \leq p_0 $) to get the bound 
\begin{equation} \label{e:boundM}
 \PP _{ z , \tilde{z} } \left( \frac{m}{2n} < M_{\beta \wedge \tilde{\beta}} + h < \infty \right) \leq C \left( \frac{n}{m} \right) ^p  
 \end{equation}
for all $ z \cdot v_\star = \tilde{z} \cdot v_\star = 0 $, where $ M_{ \beta \wedge \tilde{\beta} } = \sup \{ X_i \cdot v_\star : i \leq \beta \wedge \tilde{\beta} \}$.

We will obtain this bound differently here, for all $p \geq 1$ and for $ M_{ \underline{\beta} \wedge \underline{\tilde{\beta}} } = \sup \{ \underline{X}_i \cdot v_\star : i \leq \underline{\beta} \wedge \underline{\tilde{\beta}} \}$. This is the reason why we had to introduce the walks $ \underline{X} $ and $ \underline{\tilde{X}} $: on the event $ \underline{\beta} \wedge \underline{\tilde{\beta}} < \infty $, thanks to our construction, we get
\begin{align*}
 M_{\underline{\beta} \wedge \underline{\tilde{\beta}}} + h 
&= \sup \{ \underline{X}_n \cdot v_\star : n \leq \underline{\beta} \wedge \underline{\tilde{\beta}} \} + h &\\
& \leq C \left( \sup_{ 0 \leq n \leq \underline{\tau}_1 }  \underline{X} _n \cdot v_\star + \sup_{ 0 \leq n \leq \underline{\tilde{\tau}}_1 }  \underline{\tilde{X}} _n \cdot v_\star \right) &\\
& \leq C \left( \sup_{ 0 \leq n \leq \underline{\tau}_1 } \| \underline{X} _n \| + \sup_{ 0 \leq n \leq \underline{\tilde{\tau}}_1 } \| \underline{\tilde{X}} _n \| \right) &\\
&= C \left( \sup_{ 0 \leq n \leq \tau_1 } \| X _n \| + \sup_{ 0 \leq n \leq \tilde{\tau}_1 } \| \tilde{X} _n \| \right) .&
\end{align*}
The first inequality holds because $ | \sup \{ \underline{X}_n \cdot v_\star : n \leq \underline{\beta} \wedge \underline{\tilde{\beta}} \} - \sup \{ \underline{\tilde{X}}_n \cdot v_\star : n \leq \underline{\beta} \wedge \underline{\tilde{\beta}} \} | \leq r_0 $ by construction. Then if $ \underline{\beta} \wedge \underline{\tilde{\beta}} = \underline{\beta} $, it means $ \underline{\beta} \wedge \underline{\tilde{\beta}} \leq \underline{\tau}_1 $ and we bound by the first term of the sum, else it means $ \underline{\beta} \wedge \underline{\tilde{\beta}} \leq \underline{\tilde{\tau}}_1 $ and we bound by the second term.

Markov's inequality and condition $(T)_\gamma$ then give us the equivalent of the bound (\ref{e:boundM}) for all $ p \geq 1 $. This concludes the proof.

\end{proof}

\begin{lem}
\label{lem:joint(T)}
For all $ p \geq 1 $,
\[ \sup _{ y \in \V_d } \EE _{0,y} \left[ \left( \sup_{0 \leq n \leq \mu_1} \|X_n\| \right) ^p \right] < \infty .\]
\end{lem}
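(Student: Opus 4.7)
The plan is to decompose $\sup_{0\le n\le\mu_1}\|X_n\|$ along the regeneration structure of $X$ alone, and then combine the polynomial tail bound on $\Lambda$ from Lemma~\ref{lem:boundLambda} with the polynomial moments of block suprema supplied by condition $(T)_\gamma$.

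First, I observe that $\mu_1$ is itself a regeneration time of $X$: by the equivalent definition \eqref{e:defmu}, $\mu_1=\tau_{N}$ where $N$ is the first index $n$ such that $X_{\tau_n}\cdot v_\star$ appears as $\tilde X_{\tilde\tau_m}\cdot v_\star$ for some $m$. The fact, recalled near the end of Section~\ref{s:proof_theo_qFCLT}, that $\inf_{n\ge\tau_k}X_n\cdot v_\star\ge\sup_{n\le\tau_{k-1}}X_n\cdot v_\star+1$ together with $v_\star=e_i$ (so that $v_\star$-projections are integer valued) yields $X_{\tau_k}\cdot v_\star\ge k$ for every $k$, and therefore
\[
N\;\le\;X_{\tau_N}\cdot v_\star\;=\;\Lambda.
\]
Setting $Y_1:=\sup_{0\le n\le\tau_1}\|X_n\|$ and $Y_k:=\sup_{\tau_{k-1}\le n\le\tau_k}\|X_n-X_{\tau_{k-1}}\|$ for $k\ge2$, the triangle inequality (recalling $X_0=0$) gives
\[
\sup_{0\le n\le\mu_1}\|X_n\|\;\le\;\sum_{k=1}^{N}Y_k\;\le\;\sum_{k=1}^{\Lambda}Y_k.
\]

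Next, I use the crude bound $\bigl(\sum_{k=1}^{\Lambda}Y_k\bigr)^{p}\le \Lambda^{p}\max_{k\le\Lambda}Y_k^{p}$ and Cauchy--Schwarz to separate $\Lambda$ from the $Y_k$'s:
\[
\EE_{0,y}\!\left[\Big(\sum_{k=1}^{\Lambda}Y_k\Big)^{p}\right]
\le\EE_{0,y}[\Lambda^{2p}]^{1/2}\,\EE_{0,y}\!\left[\max_{k\le\Lambda}Y_k^{2p}\right]^{1/2}.
\]
The first factor is finite and bounded uniformly in $y\in\V_d$ by Lemma~\ref{lem:boundLambda}. For the second factor, I write
\[
\EE_{0,y}\!\left[\max_{k\le\Lambda}Y_k^{2p}\right]
\le\sum_{k\ge1}\EE_{0,y}\bigl[Y_k^{2p}\,\mathbb{1}\{\Lambda\ge k\}\bigr],
\]
and apply Cauchy--Schwarz termwise to get $\EE_{0,y}[Y_k^{2p}\mathbb{1}\{\Lambda\ge k\}]\le\EE_0[Y_k^{4p}]^{1/2}\PP_{0,y}(\Lambda\ge k)^{1/2}$, where I used that the marginal law of $X$ under $\PP_{0,y}$ is $\PP_0$, so the distribution of $Y_k$ does not depend on $y$. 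Condition $(T)_\gamma$ and the regeneration structure give $\sup_k\EE_0[Y_k^{4p}]<\infty$ (all polynomial moments of $\sup_{n\le\tau_1}\|X_n\|$ are finite, and for $k\ge2$ the $Y_k$ are i.i.d.\ copies of this quantity conditioned on $D^{v_\star}=\infty$), while Lemma~\ref{lem:boundLambda} yields $\PP_{0,y}(\Lambda\ge k)\le C_r k^{-r}$ for any $r\ge1$, uniformly in $y\in\V_d$. Choosing $r>2$ makes the sum over $k$ converge, and all bounds are uniform in $y$.

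The only real obstacle is the dependence between $\Lambda$ (a functional of both walks and the shared environment) and the $Y_k$'s (functionals of $X$ alone), which forbids a clean independence argument. The Cauchy--Schwarz decoupling above circumvents this at the price of halving the decay exponent delivered by Lemma~\ref{lem:boundLambda}; since that exponent can be taken arbitrarily large, this is harmless. This is precisely why the time-changed walks $\underline X,\underline{\tilde X}$ were introduced in Section~\ref{sec:boundonintersections}: they make Lemma~\ref{lem:boundLambda} valid for every $p$, so that the tail of $\Lambda$ beats any polynomial growth coming from the decomposition.
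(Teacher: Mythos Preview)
Your proof is correct and follows essentially the same approach as the paper's: both write $\mu_1=\tau_K$, decompose $\sup_{n\le\mu_1}\|X_n\|$ into regeneration blocks, use Cauchy--Schwarz to decouple the random number $K$ of blocks from the block suprema, bound the block suprema via condition $(T)_\gamma$, and control the tail of $K$ through $K\le C\Lambda$ and Lemma~\ref{lem:boundLambda}. The only differences are organisational: you use the centered increments $Y_k=\sup_{\tau_{k-1}\le n\le\tau_k}\|X_n-X_{\tau_{k-1}}\|$ (which makes the appeal to the i.i.d.\ regeneration structure cleaner) and the bound $(\sum Y_k)^p\le\Lambda^p\max Y_k^p$ followed by $\max\le\sum$, whereas the paper uses uncentered suprema and H\"older's inequality $(\sum)^p\le K^{p-1}\sum(\cdot)^p$ before summing over $\{K=j\}$.
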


\begin{proof}

Let $x,y \in \V_d$. Set $K$ the integer that satisfies $ \mu_1 = \tau _{K} $. By the triangle inequality, we get:
\[  \left( \sup_{0 \leq n \leq \mu_1} \|X_n\| \right) ^p 
\leq
\left( \sum _{k=1} ^K \sup_{ \tau _{k-1} \leq n \leq \tau _k } \|X_n\| \right) ^p  .\]
Holder's inequality gives:
\[  \left( \sup_{0 \leq n \leq \mu_1} \|X_n\| \right) ^p 
\leq
 K ^{p-1} \sum _{k=1} ^K  \left( \sup_{ \tau _{k-1} \leq n \leq \tau _k } \|X_n\| \right) ^p  .\]

Then
\begin{align*}
\EE _{0,y} \left[ \left( \sup_{0 \leq n \leq \mu_1} \|X_n\| \right) ^p \right]
& \leq  \EE _{0,y} \left[ \sum _{j=1} ^{+\infty} \mathbb{1}_{K=j} j ^{p-1} \sum _{k=1} ^j  \left( \sup_{ \tau _{k-1} \leq n \leq \tau _k } \|X_n\| \right) ^p  \right] & \\
& = \sum _{j=1} ^{+\infty} j ^{p-1} \EE _{0,y} \left[ \mathbb{1}_{K=j} \sum _{k=1} ^j  \left( \sup_{ \tau _{k-1} \leq n \leq \tau _k } \|X_n\| \right) ^p  \right] &\\
& \leq \sum _{j=1} ^{+\infty} j ^{p-1} \EE _{0,y} \left[ \mathbb{1}_{K=j} ^2  \right]^{\frac{1}{2}} \EE _{0} \left[ \left( \sum _{k=1} ^j  \left( \sup_{ \tau _{k-1} \leq n \leq \tau _k } \|X_n\| \right) ^p \right) ^2  \right]^{\frac{1}{2}} &
\end{align*}
where we used Cauchy-Schwarz in the last inequality.

Since, by H\"older's inequality, $ \left( \sum _{k=1} ^j  \left( \sup_{ \tau _{k-1} \leq n \leq \tau _k } \|X_n\| \right) ^p \right) ^2  \leq  j \sum _{k=1} ^j  \left( \sup_{ \tau _{k-1} \leq n \leq \tau _k } \|X_n\| \right) ^{2p} $, we get:
\begin{align*}
\EE _{0} \left[ \left( \sum _{k=1} ^j  \left( \sup_{ \tau _{k-1} \leq n \leq \tau _k } \|X_n\| \right) ^p \right) ^2  \right]
& \leq j \EE _{0} \left[ \sum _{k=1} ^j  \left( \sup_{ \tau _{k-1} \leq n \leq \tau _k } \|X_n\| \right) ^{2p} \right]  & \\
& \leq  C j^2 \EE _{0} \left[ \left( \sup_{ 0 \leq n \leq \tau _1 } \|X_n\| \right) ^{2p} \right] &\\
& \leq  C' j^2 &
\end{align*}
where $ C' < \infty $ thanks to condition~$(T)_\gamma$ and does not depend on $y$.

We then obtain
\begin{align*}
\EE _{0,y} \left[ \left( \sup_{0 \leq n \leq \mu_1} \|X_n\| \right) ^p \right]
& \leq \sum _{j=1} ^{+\infty} C j ^{p} \EE _{0,y} \left[ \mathbb{1}_{K=j}  \right]^{\frac{1}{2}} & \\
& = \sum _{j=1} ^{+\infty} C j ^{p} \PP _{0,y} \left[ K=j \right]^{\frac{1}{2}} .&
\end{align*}

As $ \mu_1 = \tau _{K} $, we get $ K \leq h \Lambda $. Lemma~\ref{lem:boundLambda} then allows us to conclude: for all $ q \geq 1 $
\[ \PP _{0,y} \left[ K=j \right] 
\leq \PP _{0,y} \left[ h \Lambda \geq j \right] 
\leq C_{q} (\frac{j}{h}) ^{-q},\]
where we recall that the bound is uniform on $ y \in \mathbb{V}_d $, and
\begin{align*}
\EE _{0,y} \left[ \left( \sup_{0 \leq n \leq \mu_1} \|X_n\| \right) ^p \right]
& \leq \sum _{j=1} ^{+\infty} C j ^{p} (\frac{j}{h}) ^{-\frac{q}{2}} & \\
& = \sum _{j=1} ^{+\infty} C' j ^{ p - \frac{q}{2} }  .&
\end{align*}

This sum is finite for $q$ big enough. This concludes the proof.

\end{proof}

\subsection{Markovian structure and coupling}

We will now show that for $ Y_i := \tilde{X} _{\tilde{\mu}_i} - X _{\mu_i} $, $ ( Y_i ) _{i\geq1} $ is a Markov process. Then we will construct a coupling to control its transitions.

\begin{pr}
\label{pr:Markov}
Set $ x,y \in \V_d $. Under $ \PP _{x,y} $, the process $ ( Y_i ) _{i\geq1} = ( \tilde{X} _{\tilde{\mu}_i} - X _{\mu_i} ) _{i\geq1} $ is a Markov chain on $\V_d$, with transition probabilities given by:
\[ q(x,y) = \PP _{0,x} \left( \tilde{X} _{\tilde{\mu}_1} - X _{\mu_1} = y | \beta = \tilde{\beta} = \infty \right) .\]
\end{pr}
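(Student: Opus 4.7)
The plan is to establish the Markov property of $(Y_i)_{i \geq 1}$ by applying a joint strong-regeneration decomposition at each time $(\mu_i, \tilde{\mu}_i)$. The alternative characterisation given in the preceding remark --- namely that $(\mu_i, \tilde{\mu}_i)$ is the smallest pair of individual regeneration times $(\tau_n, \tilde{\tau}_m)$ sharing a common level --- is the key starting point, because it implies that at a joint regeneration both walks individually regenerate at a common level $L_i := X_{\mu_i} \cdot v_\star = \tilde{X}_{\tilde{\mu}_i} \cdot v_\star$. In particular, $Y_i \in \V_d$; both walks remain in the closed half-space $\{z : z \cdot v_\star \geq L_i\}$ forever after time $(\mu_i, \tilde{\mu}_i)$; and the only portion of $\omega$ that could have been inspected by the two trajectories up to that time lies in the open half-space $\{z : z \cdot v_\star < L_i\}$.

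Next I would write down the joint analogue of the Sznitman--Zerner regeneration formula: for any bounded measurable functional $f$,
\[
\EE_{x,y}\bigl[f\bigl((X_{\mu_i+n} - X_{\mu_i})_n, (\tilde{X}_{\tilde{\mu}_i+n} - \tilde{X}_{\tilde{\mu}_i})_n\bigr) \,\big|\, \mathcal{F}_i\bigr]
= \EE_{0,Y_i}\bigl[f\bigl((X_n)_n, (\tilde{X}_n)_n\bigr) \,\big|\, \beta = \tilde{\beta} = \infty\bigr],
\]
where $\mathcal{F}_i$ is the $\sigma$-algebra generated by $(X_{\cdot \wedge \mu_i}, \tilde{X}_{\cdot \wedge \tilde{\mu}_i})$ together with $\{\omega_z : z \cdot v_\star < L_i\}$. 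This identity is obtained by conditioning on the environment strictly behind level $L_i$, using the i.i.d.\ product structure of $\PP$ on the fresh half-space, translation invariance (which shifts $X_{\mu_i}$ to the origin and $\tilde{X}_{\tilde{\mu}_i}$ to $Y_i$), and the observation that the joint event ``neither of the shifted walks ever descends below its starting level'' is precisely $\{\beta = \tilde{\beta} = \infty\}$.

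Once this identity is granted, the proof concludes quickly. By construction, the pair $(\mu_{i+1} - \mu_i, \tilde{\mu}_{i+1} - \tilde{\mu}_i)$ is the first joint regeneration time of the shifted pair $(X_{\mu_i + \cdot} - X_{\mu_i}, \tilde{X}_{\tilde{\mu}_i + \cdot} - \tilde{X}_{\tilde{\mu}_i})$, and consequently
\[
Y_{i+1} = (\tilde{X}_{\tilde{\mu}_{i+1}} - \tilde{X}_{\tilde{\mu}_i}) - (X_{\mu_{i+1}} - X_{\mu_i}) + Y_i
\]
is a measurable functional of that shifted pair. Applying the displayed identity to this functional yields
\[
\PP_{x,y}(Y_{i+1} \in \cdot \,|\, \mathcal{F}_i) = \PP_{0, Y_i}\bigl(\tilde{X}_{\tilde{\mu}_1} - X_{\mu_1} \in \cdot \,\big|\, \beta = \tilde{\beta} = \infty\bigr) = q(Y_i, \cdot),
\]
which depends on the past only through $Y_i$; this is the Markov property with the claimed kernel.

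The main obstacle is to make the joint regeneration identity rigorous: one must check that the multi-stage construction of $\mu_1$ via the auxiliary walks $\underline{X}, \underline{\tilde{X}}$ and the intermediate levels $\lambda_n$ inspects no environment values strictly above level $L_i$ beyond those forced by the trajectories themselves, so that the i.i.d.\ structure of $\omega$ on $\{z : z \cdot v_\star \geq L_i\}$ is preserved under the conditioning $\{\beta = \tilde{\beta} = \infty\}$. This is a direct adaptation of the standard Sznitman--Zerner argument, but requires care because the joint definition mixes the two trajectories simultaneously through the time-changed processes.
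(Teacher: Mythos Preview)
Your proposal is correct and follows the same approach as the paper, which simply defers to Proposition~7.7 of \cite{RaSe09} after observing that the modified joint regeneration construction preserves the independence of the regeneration slabs. The joint Sznitman--Zerner identity you display and the care you flag about the staged construction not inspecting the fresh half-space are precisely the content of that reference; you have in fact written out more detail than the paper itself provides.
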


The Markov chain only starts from $\tilde{X} _{\tilde{\mu}_1} - X _{\mu_1}$, because we do not know if $\beta = \tilde{\beta} = \infty$ after $X_0$ and $\tilde{X}_0$.

\begin{proof}
This proposition is very close from proposition~7.7 in \cite{RaSe09}. The proof follows exactly the same steps, as our modification of the regeneration structure preserves the independence property of the regeneration slabs.
\end{proof}

We now compare $Y_i$ to a random walk obtained similarly, but with the joint regeneration times of two independent walks in independent environments (instead of in the same environment). We consider a pair of walks $ ( X , \overline{X} ) $ of law $ \PP _0 \otimes \PP_z $, with $ z \in \V_d $. We denote by $\beta$ and $\overline{\beta}$ the backtracking times of $ X $ and $ \overline{X}$, and we construct the joint regeneration times $ ( \rho_i , \overline{\rho}_i ) _{i\geq1} $ in the same manner as we did for $ ( \mu_i , \tilde{\mu}_i ) _{i\geq1} $ (the only difference being that $ ( X , \tilde{X} ) $ were evolving in the same environment, whereas $ ( X , \overline{X} ) $ are in independent environments). Now, set $ \overline{Y}_i := \overline{X} _{\overline{\rho}_i} - X _{\rho_i} $.

\begin{pr}
The process $ ( \overline{Y}_i ) _{i\geq1} = ( \overline{X} _{\overline{\rho}_i} - X _{\rho_i} ) _{i\geq1} $ is a Markov chain on $\V_d$, and its transition probabilities satisfy:
\begin{align*}
\overline{q}(x,y) 
& = \PP _0 \otimes \PP _x  \left( \overline{X} _{\overline{\rho}_1} - X _{\rho_1} = y | \beta = \overline{\beta} = \infty \right) &\\
& = \PP _0 \otimes \PP _0  \left( \overline{X} _{\overline{\rho}_1} - X _{\rho_1} = y - x | \beta = \overline{\beta} = \infty \right) &\\
& = \overline{q}(0,y-x) &\\
& = \overline{q}(0,x-y) .&
\end{align*}
\end{pr}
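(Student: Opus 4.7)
The plan is to adapt the arguments behind Proposition~\ref{pr:Markov} to the independent-environment setting and then exploit two symmetries that the joint regeneration construction possesses.

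For the Markov property, I would repeat, with only cosmetic changes, the proof of Proposition~\ref{pr:Markov} (and hence of Proposition~7.7 in \cite{RaSe09}). The joint regeneration construction is a path functional of the two walks, so it applies identically to $(X,\overline{X})$; and since $X$ and $\overline{X}$ are now driven by \emph{independent} environments, the independence of past and future across a joint regeneration level is actually stronger than in the same-environment case. As the construction only records positions through their relative displacement at the regeneration times, the conditional law of $\overline{Y}_{i+1}$ given the past depends only on $\overline{Y}_i$, yielding the Markov property together with the first displayed expression
\[
\overline{q}(x,y) = \PP_0 \otimes \PP_x \left( \overline{X}_{\overline{\rho}_1} - X_{\rho_1} = y \mid \beta = \overline{\beta} = \infty \right).
\]

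For the equality $\overline{q}(x,y) = \overline{q}(0,y-x)$, I would invoke translation invariance. Since $\PP$ is i.i.d.\ on $\Z^d$, the shift $\overline{X}_n \mapsto \overline{X}_n - x$ turns $\PP_x$ into $\PP_0$ while leaving $X$ alone. Because $x \in \V_d$, i.e.\ $x \cdot v_\star = 0$, every ingredient in the construction of $\overline{\rho}_1$ (the auxiliary walks $\underline{X}$ and $\underline{\overline{X}}$, the time-change indices $x_k$ and $\tilde{x}_k$, the backtracking event $\{\overline{\beta}=\infty\}$, and the joint fresh level $\Lambda$) depends on $\overline{X}$ only through $\overline{X}\cdot v_\star$, which is preserved under this shift. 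Therefore $\overline{X}_{\overline{\rho}_1}$ is simply shifted by $-x$, and $\overline{X}_{\overline{\rho}_1} - X_{\rho_1} = y$ becomes $\overline{X}_{\overline{\rho}_1} - X_{\rho_1} = y - x$ under $\PP_0 \otimes \PP_0$, which is the second equality. The third equality is then purely definitional, obtained by reading off the first formula with $(0,y-x)$ in place of $(x,y)$.

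For the final symmetry $\overline{q}(0,z) = \overline{q}(0,-z)$, I would use that swapping the roles of the two independent walks preserves the product law $\PP_0 \otimes \PP_0$, the conditioning event $\{\beta = \overline{\beta} = \infty\}$, and the joint regeneration construction itself (since the time-change rule advances whichever walk is trailing, treating the two walkers symmetrically). The swap sends $(\rho_1, \overline{\rho}_1)$ to $(\overline{\rho}_1, \rho_1)$ and hence $\overline{X}_{\overline{\rho}_1} - X_{\rho_1}$ to its negative; consequently that random variable has a symmetric distribution under $\PP_0 \otimes \PP_0(\,\cdot \mid \beta = \overline{\beta} = \infty)$, which is precisely the fourth equality.

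The main point requiring real care is checking that the joint regeneration construction is genuinely invariant both under $\V_d$-translations of $\overline{X}$ and under the swap $X \leftrightarrow \overline{X}$; both invariances reduce to the observation that the construction depends on the two walks only through their $v_\star$-components and is symmetric in its two arguments, so each becomes essentially immediate once one unwinds the definition of $\lambda_n$ and of $(\mu_i,\tilde{\mu}_i)$ transplanted to $(\rho_i,\overline{\rho}_i)$.
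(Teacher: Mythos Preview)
Your proposal is correct and matches the paper's own treatment, which simply defers to Proposition~7.8 of \cite{RaSe09} noting that the modified regeneration structure preserves the independence of regeneration slabs; your expansion into Markov property plus translation invariance plus swap symmetry is exactly what that reference does. One small imprecision: the time-change rule is not literally symmetric in the two walks (ties in the running maxima always advance $X$), but this is harmless because, as the Remark following the definition of $(\mu_i,\tilde\mu_i)$ shows, the joint regeneration level $\Lambda$ admits the equivalent, manifestly symmetric description as the first level at which both individual walks regenerate, so the swap argument goes through.
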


\begin{proof}
This proposition is similar to proposition~7.8 of \cite{RaSe09}. The proof follows exactly the same steps, as our modification of the regeneration structure preserves the independence property of the regeneration slabs.
\end{proof}

As in lemma 7.9 of \cite{RaSe09}, this allows to prove that for all $ z, w $ such that $ q(z,w) > 0 $, we also get $ \overline{q}(z,w) > 0 $.

In the following, we will detach the notations $ Y_i $ and $ \overline{Y}_i $ from their definitions in terms of $ X, \tilde{X}, \overline{X} $. We will then use $ ( Y_i ) $ and $ ( \overline{Y}_i ) $ to represent the canonical Markov chains of transition probabilities $ q $ and $ \overline{q} $. This allow to construct the following coupling:

\begin{pr}
\label{pr:coupling}
The probability transitions $ q (x,y) $ for $Y$ and $ \overline{q} (x,y) $ for $\overline{Y}$ can be coupled in such a way that, for all $ x \in \V_d $, $x\neq0$, for all $p\geq1$
\[ \PP _{x,x} ( Y_1 \neq \overline{Y}_1 ) \leq C_p |x|^{-p} ,\]
where $C_p$ is a finite positive constant independent of $x$.
\end{pr}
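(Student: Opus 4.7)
The plan is to couple the two transition kernels explicitly, force $Y_1 = \overline{Y}_1$ on an event where the trajectories of $X$ and $\tilde{X}$ do not overlap before their first joint regeneration, and then estimate the probability of the complementary ``intersection'' event using the polynomial moments supplied by Lemma~\ref{lem:joint(T)}.

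For the coupling, I would take two independent i.i.d.\ environments $\omega, \omega'$, let $X$ start from $0$ and $\tilde{X}$ start from $x$, both driven by $\omega$, and define a composite environment $\omega^\star$ by
\[
\omega^\star_z := \omega'_z \text{ if } z \in X[0, \mu_1), \qquad \omega^\star_z := \omega_z \text{ otherwise},
\]
letting $\overline{X}$ be the walk from $x$ in $\omega^\star$, coupled to $\tilde{X}$ through a common source of uniform noise at each step. A direct verification shows that $\omega^\star$ is i.i.d.\ and independent of $X$'s trajectory (on $X[0,\mu_1)$ it equals the independent $\omega'$, on $X[0,\mu_1)^c$ it equals $\omega$ which is independent of $\omega|_{X[0,\mu_1)}$ and hence of the trajectory of $X$), so that $(X, \overline{X})$ has the required marginal law $\PP_0 \otimes \PP_x$.

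Introduce the no-intersection event
\[
\mathcal{I} := \{ X[0, \mu_1) \cap \tilde{X}[0, \tilde{\mu}_1) = \emptyset \}.
\]
On $\mathcal{I}$, an induction on time shows that $\tilde{X} = \overline{X}$ up to $\tilde{\mu}_1$: at every step before $\tilde{\mu}_1$ both walks occupy a common site outside $X[0, \mu_1)$, where $\omega = \omega^\star$, so the common noise produces identical jumps. For times after $\tilde{\mu}_1$, the regeneration property forces $\tilde{X}$ to stay at levels $\geq \Lambda$, while $X[0, \mu_1) \subset \{z \cdot v_\star < \Lambda\}$ by definition of $\mu_1 = \gamma_\Lambda$; thus $\tilde{X}$ and $\overline{X}$ continue to agree for all time, and the time-changed walks $\underline{\tilde X}$ and $\underline{\overline X}$ are identical. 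This identifies the joint regeneration structure of $(X, \overline{X})$ with that of $(X, \tilde{X})$ on $\mathcal{I}$ and yields $Y_1 = \overline{Y}_1$ on that event. The conditioning on $\{\beta = \tilde{\beta} = \infty\}$ and $\{\beta = \overline{\beta} = \infty\}$ appearing in the definitions of $q$ and $\overline{q}$ costs only a constant factor by Lemma~\ref{lem:backtrack}.

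It remains to bound $\PP(\mathcal{I}^c)$: if $X[0, \mu_1) \cap \tilde{X}[0, \tilde{\mu}_1)$ contains a point $y$, then $\|y\| \leq \sup_{n \leq \mu_1} \|X_n\|$ and $\|y - x\| \leq \sup_{m \leq \tilde{\mu}_1} \|\tilde{X}_m - x\|$, so by the triangle inequality the larger of these two suprema is at least $|x|/2$. Markov's inequality at exponent $p$, combined with Lemma~\ref{lem:joint(T)} (which gives a uniform $L^p$ bound on the first supremum, and, by the symmetry of the joint regeneration construction under swapping the two walks together with translation invariance of $\PP$, also on the second), then yields $\PP(\mathcal{I}^c) \leq C_p |x|^{-p}$, which is the claim. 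The main technical obstacle will be the verification in the second step that on $\mathcal{I}$ the \emph{entire} joint regeneration structure of $(X, \overline{X})$ coincides with that of $(X, \tilde{X})$; this reduces to the observation that $X[0, \mu_1)$ lies strictly below the level $\Lambda$ while regeneration prevents $\tilde{X}$ (and hence $\overline{X}$ via the coupling) from ever re-entering this slab, but the book-keeping with the stopped walks $\underline{X}, \underline{\tilde{X}}$ from the construction of $\Lambda$ must be tracked with some care.
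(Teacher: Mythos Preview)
Your coupling has a genuine gap: the claim that $(X,\overline{X})$ has law $\PP_0\otimes\PP_x$ is not justified by your construction. You replace $\omega$ by the fresh $\omega'$ only on the set $S=X[0,\mu_1)$, and then argue that on $S^c$ the environment $\omega^\star=\omega$ is ``independent of $\omega|_S$ and hence of the trajectory of $X$''. This reasoning treats $S$ as a deterministic set and treats $X$ as if it never left $S$; neither is true. First, $\mu_1$ is the \emph{joint} regeneration time and therefore depends on $\tilde X$, which reads $\omega$ at the sites of $\tilde X[0,\tilde\mu_1)\subset S^c$; conditioning on $S$ thus biases $\omega|_{S^c}$. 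Second, the sites $X_{\mu_1},X_{\mu_1+1},\ldots$ all lie in $S^c$, so $\omega^\star=\omega$ there and is visibly correlated with the continuation of $X$ --- exactly the part of $X$ that enters the definition of $\rho_1$. Consequently $\omega^\star$ is neither i.i.d.\ nor independent of $X$, and $\overline X$ need not have the annealed law independent of $X$.

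The paper avoids this by letting $\overline X$ use the fresh environment $\overline\omega$ on the \emph{entire} range $\{X_k:k\ge 0\}$ of $X$, not just on $X[0,\mu_1)$. With that choice, conditioning on the full trajectory of $X$ leaves $\omega$ i.i.d.\ on the unvisited sites, and patching in $\overline\omega$ on the visited ones yields an environment that is genuinely fresh and independent of $X$ (the paper defers the bookkeeping to \cite{RaSe09}). The price is that your clean event $\mathcal I$ no longer suffices: with the full-range construction, $\tilde X$ could meet the post-$\mu_1$ trajectory of $X$, so one must work instead with the larger event $\{X_{[0,\mu_1\vee\rho_1)}\cap(\tilde X_{[0,\tilde\mu_1)}\cup\overline X_{[0,\overline\rho_1)})\neq\emptyset\}$, and bound its probability by applying Lemma~\ref{lem:joint(T)} to the pair $(\rho_1,\overline\rho_1)$ as well as to $(\mu_1,\tilde\mu_1)$. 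Your estimate of $\PP(\mathcal I^c)$ via Lemma~\ref{lem:joint(T)} and the triangle inequality is the right idea, but it has to be applied to this four-time event rather than the two-time one.
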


\begin{proof}

We proceed as in the proof of proposition~7.10 in \cite{RaSe09} and construct a coupling of three walks $ ( X, \tilde{X}, \overline{X} ) $ such that the pair $ ( X, \tilde{X} ) $ has distribution $ \PP _{x,y} $ and the pair $ ( X, \overline{X} ) $ has distribution $ \PP _x \PP _y $.

For this, we take as before two independent walks $ ( X , \tilde{X} ) $ that evolve in a common environment $\omega$. We take another environment $\overline{\omega}$ independent of $\omega$, and construct the walk $\overline{X}$ as follows. We set $ \overline{X} _0 = \tilde{X} _0 $, then $\overline{X}$ moves according to the environment $ \overline{\omega} $ on the sites $ \{ X_k : 0 \leq k < \infty \} $ and according to the environment $ \omega $ on all other sites. Furthermore, $ \overline{X} $ is coupled to agree with $ \tilde{X} $ until the time $T = \inf \{ n \geq 0 : \overline{X}_n \in \{ X_k : 0 \leq k < \infty \} \} $ when it hits the path of $X$.

The details of the construction of this coupling, and the verification that $X$ and $\overline{X}$ are independent works exactly as in the proof of proposition~7.10 in \cite{RaSe09}, we will then omit this part here.

We then construct the joint regeneration times as before: $ ( \mu_1, \tilde{\mu}_1 ) $ for $ ( X , \tilde{X} ) $ and $ ( \rho_1, \overline{\rho}_1 ) $ for $ ( X, \overline{X} ) $. It allows to define the paths of the walks stopped at their respective joint regeneration times:
\[ ( \Gamma , \overline{\Gamma} ) := \left( ( X _{ 0, \mu_1 } , \tilde{X} _{ 0, \tilde{\mu}_1 } ) , ( X _{ 0, \rho_1 } , \overline{X} _{ 0, \overline{\rho}_1 } )  \right) .\]

Notice that when the sets $ X _{[ 0 , \mu_1 \vee \rho_1 )} $ and $ \tilde{X}_{[ 0 , \tilde{\mu}_1 )} \cup \overline{X}_{[ 0 , \overline{\rho}_1 )} $ are disjoint, we get by construction that the paths $ \overline{X} _{ 0 , \tilde{\mu}_1 \vee \overline{\rho}_1 } $ and $ \tilde{X} _{ 0 , \tilde{\mu}_1 \vee \overline{\rho}_1 } $ are identical. This implies $ ( \mu_1, \tilde{\mu}_1 ) = ( \rho_1, \overline{\rho}_1 ) $ and $ ( X _{\mu_1} , \tilde{X} _{\tilde{\mu}_1} ) = ( X _{\rho_1} , \overline{X} _{\overline{\rho}_1} ) $.

The following lemma gives us an estimate on this event. 
This is a point where our proof differs from the one in \cite{RaSe09}.

\begin{lem}
\label{lem:intersection}
For all $x,y$ such that $x-y \in \V_d$ and $ x \neq y $, for all $p\geq1$,
\[ \PP _{x,y} \left( X _{[ 0 , \mu_1 \vee \rho_1 )} \cap ( \tilde{X}_{[ 0 , \tilde{\mu}_1 )} \cup \overline{X}_{[ 0 , \overline{\rho}_1 )} )  \neq \emptyset \right) \leq C_p | x - y | ^{-p} .\]
\end{lem}

\begin{proof}
We get
\begin{align*}
& \PP _{x,y} \left( X _{[ 0 , \mu_1 \vee \rho_1 )} \cap ( \tilde{X}_{[ 0 , \tilde{\mu}_1 )} \cup \overline{X}_{[ 0 , \overline{\rho}_1 )} )  \neq \emptyset \right) &\\
&\leq  \PP _{x,y} \left( \sup_{0 \leq n \leq \mu_1} \|X_n - x\|  \vee  \sup_{0 \leq n \leq \rho_1} \|X_n - x\|  \vee  \sup_{0 \leq n \leq \tilde{\mu}_1} \|\tilde{X}_n - y\|  \vee  \sup_{0 \leq n \leq \overline{\rho}_1} \|\overline{X}_n - y\|  > \frac{|x-y|}{2} \right) .&
\end{align*}
Indeed, if the walk $X$ intersects with $\tilde{X}$, it also intersects with $\overline{X}$ because of the coupling. Such an intersection means that either the walk $X$ covered more than half the initial distance before $ \mu_1 \vee \rho_1 $, or $\tilde{X}$ and $\overline{X}$ did before $ \tilde{\mu}_1 $ respectively $ \overline{\rho}_1 $. 

Lemma~\ref{lem:joint(T)}, extended to cover the case of $ ( \rho_1 , \overline{\rho}_1 ) $, then give us the $ C_p | x - y | ^{-p} $ bound. It concludes the proof of the lemma.
\end{proof}

This proves that for all $p \geq 1$,
\[ \PP _{x,y} \left( \Gamma \neq \overline{\Gamma} \right) \leq C_p | x - y | ^{-p} .\]

The following of the proof (taking care of the conditioning on no backtracking) works again exactly as in the end of the proof of proposition~7.10 in \cite{RaSe09}, replacing their particular $p_0$ by any $p$. We will then omit this part here.

\end{proof}

\subsection{Bound on the number of common points}

We now return to the proof of the bound of $\EE\left[Q_n \right]$, where $Q_n$ represents the number of intersections of two independent copies of $X$ in the same random environment $\omega$ up to time $n$.

The use of the joint regeneration times allow us to write:
\[ \EE _{0,0} \left[ | X_{[0,n)} \cap \tilde{X}_{[0,n)} | \right] 
\leq
 \sum _{i=0} ^{n-1}  \EE _{0,0} \left[ | X_{[ \mu_i , \mu_{i+1} )} \cap \tilde{X}_{[ \tilde{\mu}_i , \tilde{\mu}_{i+1} )} | \right] .\]

The term $i=0$ is a finite constant thanks to lemma~\ref{lem:joint(T)}. Indeed, the number of common points is bounded by the number of points $y$ such that $ \| y \| \leq \sup_{0 \leq n \leq \mu_1} \|X_n\| $.

For each $ 0 < i < n $, we use the same decomposition into pairs of paths as in \cite{RaSe09}. It gives:
\[ \EE _{0,0} \left[ | X_{[ \mu_i , \mu_{i+1} )} \cap \tilde{X}_{[ \tilde{\mu}_i , \tilde{\mu}_{i+1} )} | \right]  
=
\sum _{x_1, y_1} \PP _{0,0} ( X _{\mu_i} = x_1 , \tilde{X} _{\tilde{\mu}_i} = y_1 ) \EE _{x_1, y_1} \left[ | X_{[0,\mu_1)} \cap \tilde{X}_{[0,\tilde{\mu}_1)} | | \beta = \tilde{\beta} = \infty \right] .\]
We can get bounds on this conditional expectation:
\begin{align*}
& \EE _{x_1, y_1} \left[ | X_{[0,\mu_1)} \cap \tilde{X}_{[0,\tilde{\mu}_1)} | | \beta = \tilde{\beta} = \infty \right] &\\
& \leq \eta ^{-1} \EE _{x_1, y_1} \left[  | X_{[0,\mu_1)} \cap \tilde{X}_{[0,\tilde{\mu}_1)} |  \right] &\\
& \leq C \EE _{x_1, y_1} \left[ \left( \sup_{0 \leq n \leq \mu_1} \| X_n - X_0 \| \right) ^d \mathbb{1}_{ X_{[0,\mu_1)} \cap \tilde{X}_{[0,\tilde{\mu}_1)} \neq \emptyset }  \right] &\\
& \leq C \EE _{x_1, y_1} \left[ \left( \sup_{0 \leq n \leq \mu_1} \| X_n - X_0 \| \right) ^{2d} \right] ^{\frac{1}{2}} \PP _{x_1, y_1} \left[  X_{[0,\mu_1)} \cap \tilde{X}_{[0,\tilde{\mu}_1)} \neq \emptyset  \right] ^{\frac{1}{2}} &\\
& \leq C' \left( ( 1 \vee | x_1 - y_1 | )^{-2p} \right) ^{\frac{1}{2}}  &\\
& = C' ( 1 \vee | x_1 - y_1 | )^{-p} &\\
& = h_p ( x_1 - y_1 ) &
\end{align*}
where we used successively lemma~\ref{lem:backtrack}, the bound on the number of common points by the number of points $y$ such that $ \| y \| \leq \sup_{0 \leq n \leq \mu_1} \|X_n\| $, the Cauchy-Scharz inequality and lemmas~\ref{lem:joint(T)} and \ref{lem:intersection}. On the last line, we used the definition:
\[ h_p (x) := C' ( 1 \vee | x | )^{-p} .\]

Inserting this in the precedent equalities gives: for any $ p \geq 1 $,
\begin{align*}
\EE _{0,0} \left[ | X_{[ \mu_i , \mu_{i+1} )} \cap \tilde{X}_{[ \tilde{\mu}_i , \tilde{\mu}_{i+1} )} | \right] 
& \leq \EE _{0,0} \left[ h ( \tilde{X} _{\tilde{\mu} _i} - X _{\mu _i} ) \right]  &\\
& = \sum _x \PP _{0,0} ( \tilde{X} _{\tilde{\mu} _1} - X _{\mu _1}  = x ) \sum _y q ^{i-1} (x,y) h_p(y) &
\end{align*}
where the last equality is obtained thanks to the Markov property of proposition~\ref{pr:Markov}.

We now want to use the following proposition:
\begin{pr}[Theorem~A.1 in \cite{RaSe09}]
Let $\mathbb{S}$ be a subgroup of $\Z ^d$. Set $ Y = (Y_k) _{k \geq 0} $ be a Markov chain on $\mathbb{S}$ with transition probabilities $ q(x,y) $. Set $ \overline{Y} = ( \overline{Y}_k ) _{ k \geq 0 } $ be a symmetric random walk on $\mathbb{S}$ with transition probabilities $ \overline{q} (x,y) = \overline{q} (y,x) = \overline{q} (0,y-x) $.

We make the following assumptions:
\begin{enumerate}
\item[(A.i)] The walk $\overline{Y}$ has a finite  third moment: $ \EE_0 ( | \overline{Y}_1 | ^3 ) < \infty $.
\item[(A.ii)] Set $ U_r := \inf \{ n \geq 0 : Y_n \notin [-r,r] ^d \} $ the time needed for the Markov chain $Y$ to exit a cube of size $2r+1$. Then there is a constant $ 0 < K < \infty $ such that for all $ r \geq 1 $,
\[ \sup _{ x \in [-r,r]^d } \EE _x (U_r) \leq K ^r .\]
\item[(A.iii)] Set $ Y = (Y ^1, \dots, Y^d) $. For every $ i \in \{ 1, \dots, d \} $, if the one-dimensional random walk $\overline{Y}^i$ is degenerate in the sense that $ \overline{q}(0,y) = 0 $ for $ y ^i \neq 0 $, then so is $Y^i$ in the sense that $ q ( x , y) = 0 $ whenever $ x^i \neq y^i $. It means that any coordinate that can move in the $Y$ chain somewhere in space can also move in the $\overline{Y}$ walk.
\item[(A.iv)] For all $ x \neq 0 $, we can couple the transition probabilities $q$ and $\overline{q}$ to satisfy: for all $ p \geq 1 $,
\[ \PP _{x,x} ( Y_1 \neq \overline{Y}_1 ) \leq C |x| ^{-p} ,\]
with $ 0 < C < \infty $ independent of $x$.
\end{enumerate}

Now take $h$ a function on $\mathbb{S}$ such that for $ p_0 \geq 1$, $ 0 \leq h(x) \leq C ( 1 \vee |x| ) ^{-p_0} $ with $C$ a finite positive constant.

Then there are constants $ 0 < C < \infty $ and $ 0 < \eta < \frac{1}{2} $ such that for all $ n \geq 1 $ and $ z \in \mathbb{S} $,
\[ \sum _{k=0} ^{n-1} \EE _z ( h (Y_k) ) 
= \sum _y h(y) \sum _{k=0} ^{n-1} \PP _z ( Y_k = y ) 
\leq C n ^{1 - \eta} .\]
Furthermore, $ 1 - \eta $ can be taken arbitrarily close to $\frac{1}{2}$ if we take $p_0$ big enough.

\end{pr}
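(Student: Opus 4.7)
The plan is to bound the sum for $Y$ by comparison with the symmetric random walk $\overline{Y}$, whose Green's function admits explicit estimates. First, by (A.iii), after restricting to the effective coordinates I may assume $\overline{Y}$ moves genuinely in $d'$ dimensions and $Y$ is confined to $\Z^{d'}$. The symmetry and finite third moment of $\overline{Y}$, combined with the local CLT, then yield
\[
\PP_z(\overline{Y}_k = y) \leq C k^{-d'/2}, \qquad k \geq 1.
\]
Integrating against $h(y) \leq C(1 \vee |y|)^{-p_0}$ and summing in $k$ produces the base estimate $\sum_{k=0}^{n-1} \EE_z h(\overline{Y}_k) \leq C n^{1-\eta_0}$ with $\eta_0 > 0$ once $p_0$ exceeds some dimension-dependent threshold; the hardest case is $d' = 1$, in which one reaches $\eta_0 = 1/2$ but no further.

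To transfer the estimate from $\overline{Y}$ to $Y$, I would split the sum by a cutoff radius $R$:
\[
\sum_{k=0}^{n-1} \EE_z h(Y_k) \leq C n R^{-p_0} + C \sum_{k=0}^{n-1} \PP_z(|Y_k| \leq R),
\]
using that $h$ is bounded on the cube and decays like $R^{-p_0}$ outside it. The second sum, which is the expected time $Y$ spends in $[-R,R]^{d'}$, would be bounded by a regenerative decomposition of the trajectory of $Y$ into alternating interior and exterior excursions. Each interior excursion has expected length at most $K^R$ by (A.ii). Each exterior excursion, in turn, is matched via the coupling (A.iv)---refreshed at every new entry of $Y$ into the cube---with an excursion of $\overline{Y}$, so that the number of entries for $Y$ is dominated, up to lower order corrections, by the analogous quantity for $\overline{Y}$, which is controlled through the Green's function bound. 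Taking $R$ of order $\log n$ makes $K^R$ only polynomial in $n$, and a final optimization in $R$ gives a bound of the form $n^{1-\eta}$ with $1-\eta \to 1/2$ as $p_0 \to \infty$.

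The main obstacle is implementing the coupling transfer rigorously. A naive step-by-step use of (A.iv) leads to a circular inequality: the decoupling probability $\PP(Y_k \neq \overline{Y}_k)$ is bounded by $C\,\EE[|Y_{k-1}|^{-p}]$, which for $p=p_0$ is exactly of the same order as $\EE h(Y_{k-1})$, so the resulting estimate closes only tautologically. The way around this is the regenerative restart described above: at every fresh entry of $Y$ into $[-R,R]^{d'}$ one discards the previous coupling and starts a new one between $Y$ and a fresh copy of $\overline{Y}$, using (A.ii) to bound the interior contribution deterministically in expectation. This separates the two error sources---trapping near the origin, handled by (A.ii), and decoupling at large distances, handled by (A.iv) with a large choice of $p$---and lets the estimate close with the quantitative dependence on $p_0$ asserted in the statement.
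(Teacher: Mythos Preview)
The paper does not prove this proposition at all: it is quoted as Theorem~A.1 of Rassoul-Agha and Sepp\"al\"ainen \cite{RaSe09} and used as a black box, the only work done being the verification that the specific chains $Y$ and $\overline{Y}$ constructed earlier satisfy hypotheses (A.i)--(A.iv). So there is no proof here to compare your attempt against.

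That said, your sketch is along the right lines and matches the architecture of the argument in the appendix of \cite{RaSe09}: reduce via (A.iii) to genuinely $d'$-dimensional motion, use the local CLT for the symmetric walk $\overline{Y}$ to get the Green's function bound $\sum_{k<n}\PP_z(\overline{Y}_k=y)\le Cn^{1/2}$ (the worst case $d'=1$), split the sum for $Y$ at a radius $R$, bound interior sojourns by (A.ii), and control the number of returns to $[-R,R]^{d'}$ by coupling each exterior excursion of $Y$ to one of $\overline{Y}$ through (A.iv). Your observation that a step-by-step coupling yields a circular inequality and that the fix is to restart the coupling at each re-entry---so that (A.ii) handles trapping near the origin and (A.iv), with $p$ large, handles decoupling far away---is exactly the point. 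Taking $R$ of order $\log n$ so that $K^R$ is only polynomial, and then optimizing, gives the claimed $n^{1-\eta}$ with $1-\eta\to\tfrac12$ as $p_0\to\infty$. What remains to make this a proof rather than a plan is the bookkeeping: bounding the expected number of returns of $\overline{Y}$ to the cube, quantifying the failure probability of each coupling block, and summing the geometric series cleanly; none of this is conceptually new beyond what you have outlined.
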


We will prove that our Markov Chains satisfy assumptions $(A.i), (A.ii), (A.iii)$ and $(A.iv)$ of this theorem. Assumption $(A.i)$ follow from lemma~\ref{lem:joint(T)}, which gives for all $p$: $ \EE_{0,x} ( | \overline{X} _{\overline{\rho}_k } |^p ) + \EE_{0,x} ( | X _{\rho_k } |^p ) < \infty $. It implies $ \EE_0 ( | \overline{Y}_1 | ^3 ) < \infty $ as needed. Assumption $(A.iii)$ follow from the fact that for all $ z, w $ such that $ q(z,w) > 0 $, we also get $ \overline{q}(z,w) > 0 $. Assumption $(A.iv)$ is directly deduced from proposition~\ref{pr:coupling}.

It only remains to check assumption $(A.ii)$. We proceed as in lemma~7.13 of \cite{RaSe09}. Their proof (including their Appendix C) remains unchanged by our new definition of regeneration times.

As $ h_p $ also satisfies the hypothesis of this theorem (for $p_0 = p$), we get constants $  0 < C < \infty $ and $ 0 < \eta < \frac{1}{2} $ such that for all $ x \in \V_d $ and $ n \geq 1 $,
\[ \sum _{i=1} ^{n-1} \sum _y q ^{i-1} (x,y) h_p(y) \leq C n ^{1 - \eta} .\]
Inserting this back in the previous inequalities, we finally get: for all $p \geq 1 $, we get constants $  0 < C < \infty $ and $ 0 < \eta < \frac{1}{2} $ such that
\[  \EE\left[Q_n \right] \leq C n ^{1 - \eta} \;\; \forall \; n \in \N.\] 
Since this holds for any $p \geq 1$, the constant $1 - \eta$ can be made as close to $\frac{1}{2}$ as desired.
This concludes the proof of theorem~\ref{theo:boundonintersections}.

\end{document}